\begin{document}
\selectfont
\define\lG{\lambda(G)}
\define\lg{\lambda(g)}
\define\Gal{Gal}
\define\lgi{\lambda(g)^{-1}}
\def\on#1{\begin{equation*}#1\end{equation*}}
\def\al#1{\begin{align*}#1\end{align*}}
\def\gen#1{\langle #1\rangle }
\define\rat{\mathbb{Q}}
\define\Fq{\mathbb{F}_q}
\define\Fqtoo{\mathbb{F}_{q^{p^2}}}
\define\Fqn{\mathbb{F}_{q^{{p^n}}}}

\newcommand{\verteq}{\rotatebox{90}{$\,=$}}
\newcommand{\equalto}[2]{\underset{\scriptstyle\overset{\mkern4mu\verteq}{#2}}{#1}}
\define\trip[#1][#2][#3]{\underset{#1}{\underset{#2}{#3}}}
\define\br[#1]{\{#1\}}

\def\keywords#1{\def\@keywords{#1}}
\parskip=0.125in
\title{Normality and Short Exact Sequences of Hopf-Galois Structures}
\author{Alan Koch}
\address{Department of Mathematics, Agnes Scott College, 141 E. College Ave., Decatur, GA 30030 USA}
\email{akoch@agnesscott.edu}
\author{Timothy Kohl}
\address{Department of Mathematics and Statistics, Boston University, 111 Cummington Mall, Boston, MA 02215 USA}
\email{tkohl@math.bu.edu} 
\author{Paul J.~Truman}
\address{School of Computing and Mathematics, Keele University, Staffordshire, ST5 5BG, UK}
\email{P.J.Truman@Keele.ac.uk}
\author{Robert Underwood}
\address{Department of Mathematics and Computer Science, Auburn University at Montgomery, Montgomery, AL 36124 USA}
\email{runderwo@aum.edu}
\maketitle
\begin{abstract}
Every Hopf-Galois structure on a finite Galois extension $K/k$ where $G=Gal(K/k)$ corresponds uniquely to a regular subgroup $N\leq B=\operatorname{Perm}(G)$, normalized by $\lambda(G)\leq B$, in accordance with a theorem of Greither and Pareigis. The resulting Hopf algebra which acts on $K/k$ is $H_N=(K[N])^{\lG}$. For a given such $N$ we consider the Hopf-Galois structure arising from a subgroup $P\triangleleft N$ that is also normalized by $\lG$. This subgroup gives rise to a Hopf sub-algebra $H_P\subseteq H_N$ with fixed field $F=K^{H_P}$. By the work of Chase and Sweedler, this yields a Hopf-Galois structure on the extension $K/F$ where the action arises by base changing $H_P$ to $F\otimes_k H_P$ which is an $F$-Hopf algebra.  We examine this analogy with classical Galois theory, and also examine how the Hopf-Galois structure on $K/F$ relates to that on $K/k$. We will also pay particular attention to how the Greither-Pareigis enumeration/construction of those $H_P$ acting on $K/F$ relates to that of the $H_N$ which act on $K/k$. In the process we also examine short exact sequences of the Hopf algebras which act, whose exactness is directly tied to the descent theoretic description of these algebras.
\end{abstract}
\noindent {\it key words:} Hopf-Galois extension, Greither-Pareigis theory, regular subgroup\par
\noindent {\it MSC:} 16T05,20B35,11S20\par
\renewcommand{\thefootnote}{}
\section{Introduction}
A separable extension of fields $K/k$ is Hopf-Galois if there exists a $k$-Hopf algebra $H$ (with $\Delta:H\rightarrow H\otimes_k H$ the co-multiplication, and $\epsilon:H\rightarrow k$ the co-unit) together with a $k$-algebra homomorphism $\mu:H\rightarrow End_k(K)$ such that 
\begin{align*}
\mu(h)(xy)&=\sum_{(h)}\mu(h_{(1)})(x)\mu(h_{(2)})(y),\\
\mu(h)(1)&=\varepsilon(h)(1),\\
\end{align*}
where $\Delta(h)=\sum_{(h)}h_{(1)}\otimes h_{(2)}$, the fixed ring 
$$K^H=\{x\in K| \mu(h)(x)=\varepsilon(h)x\ \forall h\in H\}$$
is precisely $k$, and the induced map $1\#\mu:K\# H\rightarrow End_k(K)$ is an isomorphism of $k$-algebras. The canonical example of a Hopf-Galois extension is the case where $K/k$ is Galois with $G=Gal(K/k)$. In this case, the group ring $k[G]$ acts as linear combinations of automorphisms which, by Dedekind's linear independence of characters, yields an action of $k[G]$ on $K/k$ which makes it Hopf-Galois. For extensions which are normal or not, the determination of which Hopf-Galois structures that may arise (if any) and how they act are described and enumerated using a result of Greither and Pareigis, the setup of which we outline here.\par
Let $\tilde K/k$ be the normal closure of $K/k$ where $G=Gal(\tilde K/k)$ and $G'=Gal(\tilde K/K)$. If $S=G/G'$ (left cosets) then $G$ acts on $S$ via $\lambda:G\hookrightarrow B=\operatorname{Perm}(S)$ where $\lambda(a)(bG')=abG'$. One has that $\tilde K\otimes K\cong \operatorname{Map}(S,\tilde K)$ where $\operatorname{Map}(S,\tilde K)$ is the $\tilde K$-algebra of set maps from $S$ to $\tilde K$ with basis $e_{s}$ for $s\in S$ which is also a $G$-set via $g(e_{s})=e_{\lambda(g)s}$. Moreover $\operatorname{Map}(S,\tilde K)/\tilde K$ is an $N$-Galois extension of rings precisely for any $N$ which is a regular (transitive and fixed-point free) subgroup of $B$. The action of an $n\in N$ on an element of $\operatorname{Map}(S,\tilde K)$ arises from letting $n(e_{s})=e_{n(s)}$. As such $\operatorname{Map}(S,\tilde K)/\tilde K$ is Hopf-Galois with respect to the action of $\tilde K[N]$. If $N$ is normalized by $\lG$ inside $B$ then one may descend this to get an action of $H=(\tilde K[N])^{\lG}$ on $\operatorname{Map}(S,\tilde K)^{\lG}/\tilde K^G$ where $(\tilde K\otimes K)^{\lG}\cong (\tilde K)^G\otimes K\cong k\otimes K\cong K$ and $\tilde K^G=k$ of course. That is $H$ yields a Hopf-Galois structure on $K/k$. Note,
$$
\operatorname{Map}(S,\tilde K)^{\lG}=\bigl\{\sum_{gG'\in S}g(x)e_{gG'}\ |\ x\in K\bigr\}
$$
which the reader can verify is a ring isomorphic to $K$ since the $e_{gG'}$ are orthogonal idempotents which form the basis of $\operatorname{Map}(S,\tilde K)$, together with the fact that $\tilde K^{G'}=K$. If $K/k$ is already normal then $G'$ is trivial, $\tilde K=K$, $S=G$, $\lG$ embeds in $\operatorname{Perm}(S)$ as the usual left regular representation, and 
\begin{equation}
K\cong\bigl\{\sum_{g\in G}g(x)e_{g}\ |\ x\in K\bigr\} 
\end{equation}
where $n(e_{g})=e_{n(g)}$ for $N\leq \operatorname{Perm}(G)$.\par
Whether $K/k$ is normal or not, we have the following.\par
{\bf Theorem: }\cite[Thm. 2.1]{GreitherPareigis1987} Let $K/k$ be a separable field extension, $S$ and $B$ as above, then there is a 1-1 correspondence between:\par
(a) Hopf-Galois structures on $K/k$\par
(b) regular subgroups $N\leq B$ that are normalized by $\lambda(G)\leq B$.\par
\noindent 
An important point to note is that the action of $H=(\tilde K[N])^{\lambda(G)}$ comes directly from the choice of regular subgroup, and not on the isomorphism class of $H$ as a $k$-Hopf algebra. Indeed, one can frequently find regular groups $N,M\leq B$, normalized by $\lambda(G)$, which are isomorphic as abstract groups, but (being distinct) give rise to distinct actions on $K/k$. Moreover, actions aside, the resulting fixed rings $H_N=(\tilde K[N])^{\lambda(G)}$ and $H_M=(\tilde K[M])^{\lambda(G)}$ may be isomorphic as Hopf algebras. The point is that the correspondence is between regular subgroups and pairs $(H,\mu)$ where $H$ is a $k$-Hopf algebra with a structure map $\mu:H\rightarrow End_k(K)$.\par
In the Greither-Pareigis framework, the `classical` action of $k[G]$ on $K/k$ when $G=Gal(K/k)$ corresponds to the regular subgroup $N=\rho(G)$ where $\rho:G\rightarrow B$ is the right regular representation, given by $\rho(g)(x)=xg^{-1}$. Since the left and right actions commute, then $H=(K[\rho(G)])^{\lG}=K^{G}[\rho(G)]\cong k[G]$. The point is that the action of $\lambda(G)$ in this situation is solely on the coefficients.\par
One of the interesting features of this theory is that a given extension $K/k$ may be Hopf-Galois with respect to variety of different Hopf algebras, including those arising from different (non-isomorphic) regular subgroups $N$. This is especially interesting in the case where $K/k$ is already classically Galois for then it is Hopf-Galois with respect to $k[G]$ and possibly for other forms of group rings $(K[N])^{\lG}$.\par
What we shall consider is the Hopf-Galois analogs of the usual facts from Galois theory about the correspondence between subgroups $J\leq G$ and intermediate fields $F=K^{J}$, as well as the correspondence between (normal) subgroups $J\triangleleft G$ and the set of (normal) intermediate fields. In particular we explore how the Greither-Pareigis theory applies in these situations. We review some known facts and then introduce new results. One of the key objectives is to understand the relationships between the actions of the various regular subgroups of the different ambient symmetric groups. We shall primarily consider the case where $K/k$ is already Galois with $G=Gal(K/k)$ and simultaneously Hopf-Galois with respect to $H_N=(K[N])^{\lambda(G)}$ for some regular subgroup $N\leq B$ where $N\neq\rho(G)$. We will also assume that $N$ has a normal subgroup $P$ which, under the right conditions, itself gives rise to a Hopf-Galois structure, much as the way a normal subgroup of a Galois group does classically. Moreover, since a normal subgroup $P\triangleleft N$ gives rise to the exact sequence of groups $1\rightarrow P\rightarrow N\rightarrow N/P\rightarrow 1$, we wish to consider the corresponding short exact sequence of Hopf algebras which arise, specifically those Hopf algebras which act on $K/k$, $K/F$, and $F/k$.
\section{Fixed Fields in Hopf-Galois Extensions}
Hopf-Galois extensions partially mirror some of the properties of ordinary Galois extensions. Assume $K/k$ is a (classically) Galois extension with group $G$. Every intermediate field $k\subseteq F\subseteq K$ corresponds bijectively to a subgroup $J=Gal(K/F)$ such that $K/F$ is Galois with group $J$; furthermore $F/k$ is Galois if and only if $J\triangleleft G$, and if so, $Gal(F/k)\cong G/J$. 
For Hopf-Galois extensions, the correspondence is given as follows (including the broader case where $K/k$ is not classically Galois), which is a synthesis of two related statements.\par
\begin{theorem}\cite[Thm. 7.6]{ChaseSweedler1969}  and \cite[Thm. 5.1]{GreitherPareigis1987}. If $K/k$ is Hopf-Galois with respect to a $k$-Hopf algebra $H$ then there is a map
$$
\operatorname{Fix}:\{k\text{ Hopf sub-algebras}\ H'\subseteq H\}\rightarrow \{\text{intermediate fields}\ k\subseteq F\subseteq K\}
$$
where 
$$F=\operatorname{Fix}(H')=K^{H'}=\{x\in K\ |\ \mu(h)(x)=\varepsilon(h)x\ \forall h\in H'\}$$
is the fixed field with respect to $H'$. This correspondence is injective and inclusion reversing. Moreover, $F\otimes H'$ acts on $K/F$ to make it Hopf-Galois.
\end{theorem}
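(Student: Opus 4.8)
The statement is, as the attribution indicates, essentially the conjunction of \cite[Thm.~7.6]{ChaseSweedler1969} and \cite[Thm.~5.1]{GreitherPareigis1987}, so one route is simply to cite both and check compatibility. I would instead recast everything in the Greither--Pareigis picture already set up, which makes the mechanism transparent. Since $K/k$ is separable it embeds in a Galois extension $\tilde K/k$; writing $G=\operatorname{Gal}(\tilde K/k)$, $G'=\operatorname{Gal}(\tilde K/K)$, $S=G/G'$, $B=\operatorname{Perm}(S)$, the given structure corresponds to a regular $N\le B$ normalized by $\lambda(G)$, with $H=(\tilde K[N])^{\lambda(G)}$. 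The first step is a descent lemma: the base change along the faithfully flat (indeed $G$-Galois) extension $\tilde K/\tilde K^{G}=k$ induces a bijection between $k$-Hopf subalgebras $H'\subseteq H$ and $\lambda(G)$-stable subgroups $P\le N$, under which $H'=H_P:=(\tilde K[P])^{\lambda(G)}$ and $\tilde K\otimes_k H'=\tilde K[P]$. This uses that the Hopf subalgebras of a finite group algebra $\tilde K[N]$ over a field are exactly the $\tilde K[P]$ for $P\le N$ (the grouplikes of $\tilde K[N]$ are precisely $N$, and a sub-bialgebra is recovered from which grouplikes it contains); that $\lambda(G)$-stability of $\tilde K\otimes_k H'$ is automatic because $H'$ is defined over $k$; and that $\tilde K\otimes_k(-)$ and $(-)^{\lambda(G)}$ are mutually inverse on the relevant categories by Galois descent.

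Granting this, I would compute $\operatorname{Fix}(H_P)$ explicitly. Using the identification $\tilde K\otimes_k K\cong\operatorname{Map}(S,\tilde K)=\bigoplus_{s\in S}\tilde K e_s$, under which $n\in N$ permutes the idempotents $e_s$ and $\varepsilon(n)=1$, an element lies in the fixed ring of $\tilde K\otimes_k H_P=\tilde K[P]$ exactly when the corresponding map $S\to\tilde K$ is constant on $P$-orbits; since forming fixed rings commutes with the flat base change $\tilde K\otimes_k(-)$, this gives $\tilde K\otimes_k\operatorname{Fix}(H_P)\cong\operatorname{Map}(S/P,\tilde K)$ with $S/P$ the set of $P$-orbits. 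Hence $F:=\operatorname{Fix}(H_P)$ is a $k$-subalgebra of $K$ which, being a finite-dimensional integral domain, is an intermediate field; moreover $|S|=[K:k]$, and $P$ is fixed-point free, so it acts freely on $S$ with every orbit of size $|P|$, whence $[F:k]=\dim_{\tilde K}(\tilde K\otimes_k F)=|S/P|=[K:k]/|P|$ and therefore $[K:F]=|P|=\dim_k H_P$. Inclusion-reversal is immediate. Injectivity of $\operatorname{Fix}$ then follows because $F$ recovers $P$: the isomorphism above pins down the partition of $S$ into $P$-orbits, and a fixed-point free subgroup of $N$ is determined by its orbit partition --- fixing $s_0\in S$ one has $P=\{n\in N:n(s_0)\in Ps_0\}$, since $p^{-1}n$ fixing $s_0$ forces $p^{-1}n=1$.

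For the final assertion I would extend $\mu$ restricted to $H'$ to an $F$-linear map $\mu':F\otimes_k H'\to\operatorname{End}_F(K)$; this is well defined since each $\mu(h)$, $h\in H'$, is already $F$-linear: for $d\in F=K^{H'}$, $\mu(h)(dx)=\sum_{(h)}\mu(h_{(1)})(d)\,\mu(h_{(2)})(x)=\sum_{(h)}\varepsilon(h_{(1)})\,d\,\mu(h_{(2)})(x)=d\,\mu(h)(x)$. The module-algebra axioms for $F\otimes_k H'$ and the equality $K^{F\otimes_k H'}=F$ are formal consequences of the corresponding facts for $H'$, so the only substantive point is that the canonical map $j:K\otimes_F(F\otimes_k H')\cong K\otimes_k H'\to\operatorname{End}_F(K)$, $j(x\otimes h)(y)=x\,\mu(h)(y)$, is bijective. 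By the dimension count of the previous paragraph $\dim_F(K\otimes_k H')=[K:F]\,\dim_k H'=[K:F]^2=\dim_F\operatorname{End}_F(K)$, so it suffices to check $j$ is injective; after the faithfully flat base change $\tilde K\otimes_k(-)$, $j$ becomes the standard map $\operatorname{Map}(S,\tilde K)[P]\to\operatorname{End}_{\operatorname{Map}(S/P,\tilde K)}\big(\operatorname{Map}(S,\tilde K)\big)$, an isomorphism because $\operatorname{Map}(S,\tilde K)$ is free of rank one over $\operatorname{Map}(S/P,\tilde K)[P]$ (a normal-basis statement, valid since $P$ acts freely on each of its orbits). Alternatively one simply re-runs the Greither--Pareigis construction over $F$: $\tilde K/F$ is Galois, and $P$ acting on any single $P$-orbit is a regular subgroup of the corresponding symmetric group normalized by $\lambda(\operatorname{Gal}(\tilde K/F))$, whose descended Hopf algebra is $F\otimes_k H_P$.

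The weight of the argument sits in the first step and in the injectivity of $\operatorname{Fix}$; everything else is bookkeeping around the identification $\tilde K\otimes_k K\cong\operatorname{Map}(S,\tilde K)$ and its $\lambda(G)$-equivariance. Making precise that $\tilde K\otimes_k(-)$ really does induce a bijection between $k$-Hopf subalgebras of $H$ and $\lambda(G)$-stable subgroups of $N$, and that fixed rings descend correctly through it, is the delicate part --- and it is exactly this descent that simultaneously yields the injectivity of $\operatorname{Fix}$ and the bijectivity of the Galois map $j$.
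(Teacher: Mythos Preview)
The paper does not prove this theorem; it is stated with attribution to \cite[Thm.~7.6]{ChaseSweedler1969} and \cite[Thm.~5.1]{GreitherPareigis1987} and used thereafter as a known result. Your proposal therefore goes beyond the paper by supplying an actual argument. The route you take---identify $k$-Hopf subalgebras of $H$ with $\lambda(G)$-stable subgroups $P\le N$ by Galois descent, compute $\operatorname{Fix}(H_P)$ through the $\operatorname{Map}(S,\tilde K)$ model, deduce injectivity from the observation that a semi-regular subgroup of a regular $N$ is determined by its orbit through a chosen base point, and establish the Hopf-Galois property on $K/F$ via a dimension count plus faithfully flat base change---is correct and is essentially the argument implicit in the cited sources, recast in the Greither--Pareigis language the paper adopts. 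Pieces of your argument are in fact re-derived later in the paper for its own purposes: the sub-Hopf-algebra/subgroup correspondence is recorded as a separate proposition, and the determination of $J=\Psi(P)$ as $\operatorname{Orb}_P(i_G)$ is exactly Theorem~\ref{commonorbit} and Corollary~\ref{PtoJ}.
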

For separable extensions, the Hopf algebras which arise are forms of group rings, which provides a fairly straightforward way of understanding what Hopf sub-algebras may appear. If $H'\subseteq H$ where $K\otimes H\cong K[N]$ then $K\otimes H'$ is a $K$-Hopf sub-algebra of $K[N]$. But being a Hopf sub-algebra of a group-ring one has, by for example \cite[Prop. 2.1]{Crespo-Separable}, or Exercise 2.1.25 in Radford \cite{Radford}, that $K\otimes H'$ must be of the form $K[P]$ for some $P\leq N$. However, in order that $H'=H_P=(K[P])^{\lG}$ be a sub-Hopf algebra of $H=H_N=(K[N])^{\lG}$ one must have that $P$ be normalized by $\lG$. This is summarized in the following, which paraphrases a number of statements in \cite{GreitherPareigis1987} adapted to the case considered here where we are assuming $K/k$ is classically Galois. (i.e. $\tilde K = K$) \par
\begin{proposition}\cite[Thm. 5.2 and Lemma p. 256]{GreitherPareigis1987} The Hopf sub-algebras of $(K[N])^{\lG}$ correspond to subgroups of $N$ that are stabilized by $p_g$ for all $g\in G$ where $\{p_g\ |\ g\in G\}$ is the cocycles of automorphisms coming from conjugation by each $g\in G$. That is, a given $P\leq N$ gives rise to a Hopf sub-algebra $H_P=(K[P])^{\lG}$ if and only if $\lG\leq Norm_B(P)$.
\end{proposition}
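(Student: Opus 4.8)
The plan is to translate the problem, by way of Galois descent for $K/k$, into a question about $K$-Hopf subalgebras of the group ring $K[N]$, and then to apply the classification of Hopf subalgebras of a group ring recalled above.

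First I would record the descent dictionary. Since $K/k$ is Galois with group $G$, base change $V\mapsto K\otimes_k V$ is an equivalence between $k$-vector spaces and $K$-vector spaces equipped with a semilinear $G$-action, with inverse $W\mapsto W^{\lG}$; under this equivalence $H_N=(K[N])^{\lG}$ corresponds to $K[N]$ carrying the semilinear action in which $g\in G$ acts on coefficients through its Galois action and on $N\le B$ by conjugation by $\lg$ (this preserves $N$, and defines the automorphisms $p_g$, precisely because $N$ is normalized by $\lG$, which is our standing hypothesis). Because $K$ is faithfully flat over $k$, a $k$-subspace $H'\subseteq H_N$ yields the $\lG$-stable $K$-subspace $K\otimes_k H'\subseteq K[N]$, and conversely a $\lG$-stable $K$-subspace $W\subseteq K[N]$ yields $W^{\lG}\subseteq H_N$ with $K\otimes_k W^{\lG}=W$; these operations are mutually inverse. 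I would then verify that this bijection matches $k$-Hopf subalgebras with $\lG$-stable $K$-Hopf subalgebras: the structure maps of $K[N]$ (multiplication, unit, comultiplication, counit, antipode) are all equivariant for the semilinear action, as one checks directly on the group-like basis $N$, so a $\lG$-stable $K$-Hopf subalgebra descends to a $k$-Hopf subalgebra and, conversely, the Hopf structure on $H'$ is exactly the one transported from $K\otimes_k H'$.

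Next I would invoke the group-ring classification: the $K$-Hopf subalgebras of $K[N]$ are exactly the $K[P]$ for subgroups $P\le N$, by \cite[Prop. 2.1]{Crespo-Separable} (equivalently Exercise 2.1.25 in Radford \cite{Radford}), as already noted above -- concretely, $K[N]$ is pointed with group-likes exactly $N$, and a Hopf subalgebra is spanned by the group-likes it contains, which form a subgroup. It then remains to decide which $K[P]$ are stable under the semilinear $\lG$-action. The coefficient part of that action preserves $K$, so $K[P]$ is $\lG$-stable if and only if $\lg\,P\,\lgi\subseteq P$ for every $g\in G$; since all the groups in play are finite, order-counting upgrades this to $\lg\,P\,\lgi=P$ for all $g$, i.e.\ to $p_g(P)=P$ for all $g$, i.e.\ to $\lG\le\operatorname{Norm}_B(P)$. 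Tracing such a $P$ back through the two correspondences produces $(K[P])^{\lG}=H_P$, which gives the asserted bijection.

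The step I expect to require the most care is the descent bookkeeping of the first two paragraphs: confirming that the semilinear action on $K[N]$ is exactly the one dictated by the Greither--Pareigis construction (conjugation on $N\le B$, Galois action on scalars), and that ``$\lG$-stable $K$-Hopf subalgebra'' on the $K$-side really corresponds to ``$k$-Hopf subalgebra'' on the $k$-side, not merely to a subspace or subalgebra. Once that framework is in place, the group-ring classification and the equivalence ``stable under every $p_g$'' $\iff$ ``normalized by $\lG$'' are routine.
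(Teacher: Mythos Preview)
The paper does not supply an independent proof of this proposition: it is stated as a paraphrase of results in \cite{GreitherPareigis1987}, and the only argument offered is the short sketch in the paragraph immediately preceding it (base change $H'$ to $K\otimes H'\subseteq K[N]$, identify this with $K[P]$ via the group-ring classification, then observe that descent back to $k$ forces $P$ to be normalized by $\lG$). Your proposal is a fleshed-out version of exactly that sketch, with the descent bookkeeping made explicit, so there is nothing to contrast; your approach and the paper's are the same.
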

As such, if $P\leq N$ where $N$ is regular, and where both are normalized by $\lG$ then $H_N=(K[N])^{\lG}$ yields a Hopf-Galois structure on $K/k$, and similarly $H_P=(K[P])^{\lG}$ is such that $F\otimes H_P$ yields a Hopf Galois structure on $K/F$ where $F=K^{H_p}$. \par
Now, since $F$ is an intermediate field between $K$ and $k$, then $F=K^{J}$ the fixed field of some subgroup $J\leq G$ where $Gal(K/F)=J$. A natural question to ask is, since $K/F$ is Galois with group $J$ and acted on by $F\otimes H_P$ then how does it fit within the Greither-Pareigis framework? Since $F\otimes H_P$ acts on $K/F$ then it too should be a form of a group ring over $F$, namely $F[P]$. As observed by Crespo et al. in \cite[Prop. 8]{Crespo-Induced}, one has $F\otimes H_{P}\cong (K[P])^{\lambda(J)}$ with the implication that the embedding of $J$ in $\operatorname{Perm}(J)$ normalizes a regular subgroup of $\operatorname{Perm}(J)$ isomorphic to $P$ which is certainly very natural.\par
Recall that a regular subgroup of $B$ is one which is transitive and fixed-point free. As such, any subgroup of it will be fixed-point free, and such a subgroup is termed {\it semi-regular}. Note that regularity of $N\leq B$ can be defined as the property that $N$ acts fixed-point freely and that $|N|=|G|$. In this section we will explore the relationship between $P$ as a semi-regular subgroup of the regular subgroup $N\leq \operatorname{Perm}(G)$ and $P$ viewed as a regular subgroup of $\operatorname{Perm}(J)$. In particular, we shall look at how the action of $P\leq \operatorname{Perm}(G)$ on $J\leq G$ induces very naturally an embedding of $P\leq \operatorname{Perm}(J)$, as alluded to in \cite[Prop. 8]{Crespo-Induced} mentioned above. One of our tasks will be to relate these regular and semi-regular subgroups, with a view toward quantifying the action(s) that induce a Hopf-Galois structure on $F/k$ in the event that $P\triangleleft N$. Finally, the normality will also allow us to look at short exact sequences of Hopf algebras arising from actions on $K/k$, $K/F$ and $F/k$. \par

\subsection{From subgroups of $N$ to subgroups of $G$}\par
We begin with a question that has not yet been explored in the literature but which is in the background. How does $P$, giving rise to $F=K^{H_P}$, in turn give rise to that unique $J\leq G$ where $F=K^{J}$. We start with another variant of the $\operatorname{Fix}$ map, based on the equivalence
$$
\{\text{subfields}\ F\ | k\subseteq F\subseteq K\} \leftrightarrow \{\text{subgroups}\ J\leq G\}
$$
and examine the injective mapping from subgroups of $P\leq N$ normalized by $\lambda(G)$ to the set of subgroups $J\leq G$ due to the fact that $P$ gives rise to $F=K^{H_P}=K^{J}$ for exactly one $J\leq G$. 
$$\diagram
& K  \ddline^{N}_{G} \drline_{P}^{J} \\
& &F \dlline  \\
&k
\enddiagram$$
The correspondence $P\mapsto J$ is a bit mysterious, but we can show (combinatorially) how $J$ arises from $P$. Also, since we wish to explore the possibility of putting a Hopf-Galois structure on $F/k$, then, following the analogy with ordinary Galois theory, we will ultimately restrict our attention to those $P$ that are normal in $N$. This will be necessary in order to construct a form of the group ring $k[N/P]$ that will act on $F/k$. However, initially we shall look at how $J$ arises from those $P$ normalized by $\lambda(G)$.\par 
As mentioned $K/k$ is a Galois extension with $G=Gal(K/k)$ which is also Hopf-Galois with respect to the action of $H_N=(K[N])^{\lG}$ where $N\leq B=\operatorname{Perm}(G)$ is a regular subgroup, that is normalized by $\lG$. Since $F$ is an intermediate field between $K$ and $k$ we have $F=K^{J}$ for $J$ a subgroup of $G$. To determine $J$ from $P$ we shall consider, in some depth, the action of $H_N$ and $H_P$ on the copy of $K$ embedded in $\operatorname{Map}(G,K)$, namely
$$K\cong\{\sum_{g\in G}g(x)e_{g}\ |\ x\in K\}$$ 
mentioned above.\par
For $h\in H_{N}$ one has $h=\sum_{n\in N}c_n\cdot n$ where $c_n\in K$ and $n\in N$ where $\gamma(c_n)=c_{\lambda(\gamma)n\lambda(\gamma)^{-1}}$ for all $\gamma\in G$, since $h$ is fixed by all of $\lG$. Since $N$ acts on $G$, it is useful to consider certain 'slices' of $N\times G$ that arise from this action. We define
$$
Z_{N,t}=\{(n,g)\in N\times G\ |\ n(g)=t\}
$$
for each $t\in G$. The element $h$ of $H_N$ acts on a typical element of $K$ as follows 
\begin{align*}
h(\sum_{g\in G}g(x)e_g)&=\sum_{n\in N}c_n\left(\sum_{g\in G}g(x)e_{n(g)}\right)\\
                      &=\sum_{g\in G}\sum_{n\in N}c_ng(x)e_{n(g)}\\
                      &=\sum_{t\in G}\left(\sum_{(n,g)\in Z_{N,t}} c_ng(x)\right)e_t\\
                      &=\sum_{g\in G}g(y)e_{g}
\end{align*}
for some $y\in K$. This implies that 
$$
t(y)=\left(\sum_{(n,g)\in Z_{N,t}} c_ng(x)\right)
$$
for all $t\in G$, in particular for $t=i_G$ (the identity) which means 
\begin{align*}
y&=\sum_{(n,g)\in Z_{n,i_G}} c_ng(x)\\
 &=\sum_{n\in N} c_n n^{-1}(i_G)(x)\\
\end{align*}
by regularity of the action of $N$ on $G$.
Now, since $K/k$ is Hopf-Galois with respect to $H_N$ then $\operatorname{Fix}(H_N)=K^{H_N}$ which are those elements $x\in K$ such that 
$$h(x)=\varepsilon(h)x=(\sum_{n\in N}c_n)x$$
which, since $\{ n^{-1}(i_G)\ |\ n\in N\}=G$, is precisely the field $k$ since $K^{G}=k$. Similarly, for $H_P$ we have $F=K^{H_P}$. Now, $H_P\subseteq H_N$ in a natural way since $P\leq N$, in particular those $h\in H_N$ of the form $\sum_{q\in P}d_q\cdot q$ where $\gamma(d_q)=d_{\lambda(\gamma)q\lambda(\gamma)^{-1}}$ for all $\gamma\in G$. For these elements of $H_P$ we have, for $x\in K$, that
\begin{align*}
h(x)=h(\sum_{g\in G}g(x)e_g)&=\sum_{q\in P}d_q\left(\sum_{g\in G}g(x)e_{q(g)}\right)\\
                      &=\sum_{t\in G}\left(\sum_{(q,g)\in Z_{P,t}} d_q(x)\right)e_t\\
                      &=\sum_{q\in P} d_q q^{-1}(i_G)(x)\\
\end{align*}
and since $P$ is a subgroup of the regular permutation group $N$, it is semi-regular. As such, $|\{q^{-1}(i_G)\ |\ q\in P\}|=|P|$. Also $x\in K^{H_P}$ if 
$$
h(x)=\sum_{q\in P} d_q q^{-1}(i_G)(x)=\varepsilon(h)x=(\sum_{q\in P} d_q)x
$$
and since $F=K^{H_P}$ is an intermediate field, then $F=K^{J}$ for some $J\leq G$ where $|J|=dim_k(H_P)=|P|$. We can relate $J$ to $P$ by considering the action of an element which is guaranteed to lie in $H_P$, namely $h=\sum_{q\in P}1\cdot q$. For $x\in F$ one has $h(x)=|P|x$ which implies that 
\begin{align*}
\sum_{q\in P} q^{-1}(i_G)(x)&=|P|x\\
                          &=|J|x\\
\end{align*}
where $\{q^{-1}(i_G)\ |\ q\in P\}$ is some subset of $G$. What we wish to show is that this subset is precisely $J$. To show this we use the following (somewhat more general statement) which seems standard, but no reference could be found.
\begin{proposition}
\label{fixedsum}
For $G=Gal(K/k)$ with $F=K^{J}$ for $J\leq G$ as above, if $S=\{\sigma_1,\dots,\sigma_r\}\subseteq G$ such that 
$$
\sum_{i=1}^{r} \sigma_i(x) = rx
$$
for all $x\in F$, then $S\subseteq J$.
\end{proposition}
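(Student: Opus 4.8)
The plan is to reduce the hypothesis to the classical linear independence of field embeddings. First I would group the elements of $S$ by their left cosets modulo $J$: since $F=K^{J}$, two elements $\sigma_i,\sigma_j\in G$ satisfy $\sigma_i|_F=\sigma_j|_F$ exactly when $\sigma_i J=\sigma_j J$. Let $\tau_1 J,\dots,\tau_m J$ be the distinct cosets represented in $S$, and put $n_a=|\{\,i : \sigma_i\in\tau_a J\,\}|\ge 1$, so that $\sum_{a=1}^m n_a=r$. Restricting the hypothesis to $F$ and collecting terms yields
$$\sum_{a=1}^m n_a\,\tau_a(x)=r\,x\qquad\text{for all }x\in F.$$

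Next I would invoke Dedekind's lemma on linear independence of characters, in the form: the distinct $k$-embeddings $F\hookrightarrow K$ are linearly independent over $K$. These embeddings are precisely the maps $\sigma|_F$ as $\sigma$ ranges over coset representatives of $G/J$, and there are $[G:J]=[F:k]$ of them; the $\tau_a|_F$ are $m$ of these, pairwise distinct. The right-hand side $x\mapsto rx$ is $r$ times the embedding $i_G|_F=\mathrm{id}_F$. There are two cases. If the trivial coset $J$ occurs among the $\tau_a J$, say $\tau_1 J=J$, then the displayed identity becomes $(n_1-r)\,\mathrm{id}_F+\sum_{a=2}^m n_a\,\tau_a|_F=0$, a $K$-linear relation among distinct embeddings; independence forces $n_a=0$ for $a\ge 2$ and $n_1=r$. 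Since every $n_a\ge 1$, this gives $m=1$ and every $\sigma_i\in J$, i.e. $S\subseteq J$. If instead $J$ is not among the $\tau_a J$, then $\sum_{a=1}^m n_a\,\tau_a|_F-r\,\mathrm{id}_F=0$ is a nontrivial $K$-linear relation among $m+1$ distinct embeddings (note $r\ge 1$, the case $S=\emptyset$ being vacuous), contradicting independence; so this case does not occur.

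I expect the only point needing care is the bookkeeping in the case split: confirming that $\mathrm{id}_F$ genuinely appears among the embeddings being compared and that its coefficient $r$ is nonzero, so that a true linear dependence is in play. Everything else is routine — the coset grouping uses only $F=K^{J}$, and the independence of the embeddings $F\hookrightarrow K$ over $K$ is the standard Dedekind--Artin statement. This yields $S\subseteq J$, and in fact shows the sharper statement that $S$ must lie entirely inside the single coset $J$.
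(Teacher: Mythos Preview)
Your proof is correct and takes a genuinely different route from the paper's. The paper writes $\sigma_i(x)=x+\delta_{x,i}$, shows by induction on $n$ that all power sums $\sum_i\delta_{x,i}^n$ vanish, and then invokes the Newton--Girard identities (over a field containing $\mathbb{Q}$) to conclude that every elementary symmetric function of the $\delta_{x,i}$ is zero, whence $\prod_i(z-\delta_{x,i})=z^r$ and each $\delta_{x,i}=0$. Your argument bypasses this entirely: by grouping the $\sigma_i$ according to their left cosets modulo $J$ you reduce the identity to a $K$-linear relation among distinct embeddings $F\hookrightarrow K$, and Dedekind's independence of characters finishes immediately. This is shorter, more conceptual, and uses only a result already invoked elsewhere in the paper. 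Both arguments implicitly require characteristic zero: the paper needs it for Newton--Girard, and you need it so that the positive integer multiplicities $n_a$ remain nonzero as elements of $K$ when you read off coefficients from the linear-independence relation (in characteristic $p$ one can take $S$ to be a full coset $\tau J$ of size $p$ with $\tau\notin J$ and the hypothesis holds trivially, so the proposition itself fails there). It would be worth flagging this hypothesis explicitly, as the paper does.
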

\begin{proof}
The key to the proof is to show that if the sum of the elements of $S$ acts like a scalar multiple of the identity on $F$, then all $\sigma_i\in S$ are actually elements of $J$. So let's assume that for each $x\in F$ one has $\sigma_i(x)=x+\delta_{x,i}$. Since $\sum_{i=1}^{r}\sigma_i(x)=rx$ then we must have $\sum_{i=1}^r \delta_{x,i}=0$. If we use the fact that each $\sigma_i$ is an automorphism of $K$ (in particular that it is multiplicative), and if we assume $\sum_{i=1}^{r} \delta_{x,i}^l=0$ for $j=1\dots n-1$ for each $n$ then 
\begin{align*}
						 rx^n &=\sum_{i=1}^r \sigma_i(x^n)\\
						 &=\sum_{i=1}^r (x+\delta_{x,i})^n\\
                         &=\sum_{i=1}^r \sum_{j=0}^{n}\binom{n}{j} x^j\delta_{x,i}^{n-j}\\
                         &=\sum_{j=0}^{n}\sum_{i=1}^r \binom{n}{j} x^j\delta_{x,i}^{n-j}\\
                         &=\sum_{j=0}^{n} \binom{n}{j}x^j\sum_{i=1}^r \delta_{x,i}^{n-j}\\
                         &=\sum_{i=1}^r \delta_{x,i}^n+rx^n\\
\end{align*}
which means $\sum_{i=1}^r \delta_{x,i}^n=0$. To complete the proof we invoke the Newton-Girard formul\ae\  which, stated formally as \cite[Thm. 8 - Ch. 7]{Cox} are that (over a field containing $\mathbb{Q}$) the ring of symmetric functions in $r$ variables $t_1,\dots,t_r$ is generated by the power sum polynomials $\sum_{i=1}^r t_i^n$. Thus, for any $x\in F$ if we let $f(z)=(z-\epsilon_{x,1})\cdots (z-\delta_{x,r})$ then the coefficients of every term of $f(z)$ of degree lower than $r$ must be zero. That is, $f(z)=z^r$ so all its roots, namely the $\delta_{x,i}$, are identically zero. As such, $\sigma_i(x)=x$ for all $x\in F$ for $i=1,\dots, r$ so that each $\sigma_i\in J$.
\end{proof}
We use this now to show how $J$ corresponds to $P$.
\begin{theorem}
\label{commonorbit}
For $K/k$ with $G=Gal(K/k)$ and $N$, $P$ and $F=K^{H_P}$ as above, $\{q^{-1}(i_G)\ |\ q\in P\}=J$.
\end{theorem}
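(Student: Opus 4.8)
The plan is to read the result off from Proposition~\ref{fixedsum} together with the dimension bookkeeping already recorded in the paragraphs preceding the statement. Write $T=\{q^{-1}(i_G)\mid q\in P\}$. First I would observe that $P$, being a subgroup of the regular group $N$, is semi-regular, so the assignment $q\mapsto q^{-1}(i_G)$ is injective: if $q_1^{-1}(i_G)=q_2^{-1}(i_G)$ then $q_2q_1^{-1}$ fixes $i_G$, which forces $q_2q_1^{-1}=i$ by fixed-point freeness, hence $q_1=q_2$. Therefore $|T|=|P|$.

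Next I would invoke the computation already carried out above for the distinguished element $h=\sum_{q\in P}1\cdot q$, which lies in $H_P$ because its coefficients are all $1$ (hence $\lambda(G)$-stable) and $P$ is normalized by $\lambda(G)$. For $x\in F=K^{H_P}$ one has on the one hand $h(x)=\varepsilon(h)x=|P|\,x$, and on the other hand the action formula gives $h(x)=\sum_{q\in P}q^{-1}(i_G)(x)$. Rewriting the latter as a sum over the $|T|$-element set $T$, we get $\sum_{\sigma\in T}\sigma(x)=|T|\,x$ for every $x\in F$.

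Now I would apply Proposition~\ref{fixedsum} with $S=T$ and $r=|T|$: since the power sum of the elements of $T$ acts as the scalar $|T|$ on $F=K^{J}$, the proposition yields $T\subseteq J$. To promote this to equality I would compare orders: $|T|=|P|$, while $|J|=[K:F]=\dim_k H_P=|P|$, the last equality by Galois descent applied to $H_P=(K[P])^{\lG}$ (this identity was noted above, where $|J|=\dim_k(H_P)=|P|$). Hence $|T|=|J|$ and the inclusion $T\subseteq J$ must be an equality, giving $\{q^{-1}(i_G)\mid q\in P\}=J$.

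I do not anticipate a real obstacle here; the only point requiring care is confirming that the two orders genuinely agree, which rests on the facts that $\dim_k H_P=|P|$ and that a Hopf-Galois structure on $K/F$ has dimension $[K:F]$, both already in force. An alternative that sidesteps Proposition~\ref{fixedsum} would be to establish the reverse inclusion $J\subseteq T$ directly by examining the action of a general $h=\sum_{q\in P}d_q\,q\in H_P$ on $F$, but that argument seems to reproduce the Newton--Girard/linear-independence reasoning packaged inside Proposition~\ref{fixedsum}, so the route above is the cleanest.
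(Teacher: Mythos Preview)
Your proposal is correct and follows essentially the same route as the paper: apply Proposition~\ref{fixedsum} to the identity $\sum_{q\in P}q^{-1}(i_G)(x)=|P|\,x$ coming from the element $\sum_{q\in P}1\cdot q\in H_P$, obtain $T\subseteq J$, and finish by the cardinality match $|T|=|P|=|J|$. Your write-up is slightly more explicit (spelling out the injectivity of $q\mapsto q^{-1}(i_G)$ from semi-regularity and why $h$ lies in $H_P$), but the argument is the paper's own.
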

\begin{proof}
As seen above, for $x\in F$ one has $\sum_{q\in P} q^{-1}(i_G)(x)=|J|x$ where  $\{q^{-1}(i_G)\ |\ q\in P\}\subseteq G$. By Prop. \ref{fixedsum} above this set must consist of elements of $J$ and since $|P|=|J|$ this set is exactly $J$.
\end{proof}
We will need the following in the next section.
\begin{corollary}
\label{orbitcoset}
Given $N$, $P$, $G$ and $J$ as above, for $g\in G$,  $Orb_{P}(g)=\{q(g)\ |\ q\in P\}=gJ$, the left coset of $g$ in $J$.
\end{corollary}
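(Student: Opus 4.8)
The plan is to deduce this directly from Theorem~\ref{commonorbit}, using only one extra ingredient: that $\lambda(G)$ normalizes $P$ inside $B$, together with the explicit description of $\lambda(g)$ on $S=G$ as left translation, $\lambda(g)(x)=gx$, as recorded in the introduction for the classically Galois case. First I would note that Theorem~\ref{commonorbit} already yields the $g=i_G$ case: since $q\mapsto q^{-1}$ permutes $P$, we have $Orb_P(i_G)=\{q(i_G)\mid q\in P\}=\{q^{-1}(i_G)\mid q\in P\}=J$.

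Now fix $g\in G$. The key observation is that $g=\lambda(g)(i_G)$, so for $q\in P$ one can write
\[
q(g)=q\bigl(\lambda(g)(i_G)\bigr)=\bigl(q\circ\lambda(g)\bigr)(i_G).
\]
Because $\lambda(g)$ normalizes $P$, the permutation $q':=\lambda(g)^{-1}q\,\lambda(g)$ lies in $P$ and satisfies $q\circ\lambda(g)=\lambda(g)\circ q'$. Substituting and again using that $\lambda(g)$ is left translation by $g$ gives $q(g)=\lambda(g)\bigl(q'(i_G)\bigr)=g\cdot q'(i_G)$. As $q$ ranges over $P$, the conjugate $q'=\lambda(g)^{-1}q\,\lambda(g)$ ranges over all of $P$, whence
\[
Orb_P(g)=\{q(g)\mid q\in P\}=\{\,g\cdot q'(i_G)\mid q'\in P\,\}=g\cdot\{\,q'(i_G)\mid q'\in P\,\}=gJ,
\]
the last equality being the $i_G$ case above.

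The argument is short, and I do not anticipate a genuine obstacle; the only thing requiring care is the left/right bookkeeping, namely checking that the normalization hypothesis produces the conjugate in the correct order so that $q\circ\lambda(g)=\lambda(g)\circ q'$ with $q'\in P$, and that $\lambda(g)$ indeed acts on $S=G$ as $x\mapsto gx$ as set up in the introduction. Conceptually, the corollary simply records that the $P$-orbit decomposition of $G$ is $\lambda(G)$-equivariant (because $\lambda(G)$ normalizes $P$), and therefore, once the block through $i_G$ is known to be the subgroup $J$, every other block is forced to be a left coset $gJ$.
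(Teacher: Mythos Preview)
Your argument is correct and is essentially the same as the paper's: both use that $\lambda(G)$ normalizes $P$ to write $\{\lambda(g)^{-1}q\lambda(g)(i_G)\mid q\in P\}=J$, then translate by $g$ via $\lambda(g)(i_G)=g$ to obtain $Orb_P(g)=gJ$. The only cosmetic difference is that the paper phrases the final step as left-multiplying the identity $J=\{g^{-1}q(g)\mid q\in P\}$ by $g$, while you run the same computation forward from $q(g)=\lambda(g)(q'(i_G))$.
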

\begin{proof}
Since $G$ normalizes $P$ then $\lambda(g)^{-1}P\lambda(g)=P$ and by Thm. \ref{commonorbit} above, $Orb_P(i_G)=J$ which implies
$$J=\{\lambda(g)^{-1}q\lambda(g)(i_G)\ |\ q\in P\}=\{g^{-1}q(g)\ |\ q\in P\}$$
and so left multiplying both sides by $g$ yields the result.
\end{proof}
Now, given that sub-Hopf algebras of $H_N$ correspond injectively to intermediate fields $k\subseteq F\subseteq K$, which, in turn, corresponds bijectively to subgroups of $J\leq G$, we also have
\begin{corollary}
\label{PtoJ}
The correspondence 
$$
\{\text{subgroups}\ of\ N\ \text{normalized by} \lG\}\overset{\Psi}{\longrightarrow}\{\text{subgroups of}\ G\}
$$
given by $\Psi(P)=Orb_{P}(i_G)$ is injective where the subset $Orb_{\lambda(J)}(i_G)$ is the sub{\it group} corresponding to the Galois group of $K/K^{H_P}$.
\end{corollary}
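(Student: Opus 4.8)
The plan is to read this corollary off from Theorem \ref{commonorbit} and Corollary \ref{orbitcoset}, the only genuinely new point being the injectivity of $\Psi$. I would begin by recording what $\Psi$ does: for $P \leq N$ normalized by $\lambda(G)$, Theorem \ref{commonorbit} gives $\Psi(P) = Orb_P(i_G) = \{q(i_G)\mid q \in P\} = \{q^{-1}(i_G)\mid q \in P\} = J$, the two descriptions of the orbit agreeing because inversion permutes the group $P$, and $J \leq G$ being precisely the subgroup with $K^J = K^{H_P}$, i.e.\ $J = Gal(K/K^{H_P})$. This already shows that $\Psi$ takes values in \emph{subgroups} of $G$ and identifies the value as the Galois group named in the statement. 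To make the subgroup property transparent I would also note that, since $\lambda(j)(i_G) = j\cdot i_G = j$ for every $j \in J$, one has $Orb_{\lambda(J)}(i_G) = J$: the orbit of the identity under a group acting by left translation is that very group, which is the content of the clause about $Orb_{\lambda(J)}(i_G)$ being the subgroup corresponding to $Gal(K/K^{H_P})$.

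For injectivity I would argue directly. Fix $N$ and suppose $P_1, P_2 \leq N$ are each normalized by $\lambda(G)$ with $Orb_{P_1}(i_G) = Orb_{P_2}(i_G)$; call this common subgroup $J$. Given any $q_1 \in P_1$, the value $q_1(i_G)$ lies in $J = Orb_{P_2}(i_G)$, so there is $q_2 \in P_2$ with $q_2(i_G) = q_1(i_G)$. Then $q_2^{-1}q_1$ is an element of $N$ fixing $i_G$, and since the regular group $N$ acts on $G$ fixed-point freely this forces $q_2^{-1}q_1$ to be the identity permutation, whence $q_1 = q_2 \in P_2$. Thus $P_1 \subseteq P_2$, and by symmetry $P_1 = P_2$.

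There is no serious obstacle here, but two points deserve care. First, $\Psi$ is a priori only a map into subsets of $G$; both the fact that its values are subgroups and the injectivity rely on $P$ sitting inside a \emph{regular} group, as semiregularity alone yields only $|Orb_P(i_G)| = |P|$. So I would stress that $N$ is fixed throughout: that this fixing is essential is shown by $\lambda(J)$ and $\rho(J)$, which lie in different regular subgroups of $B$ yet both have orbit $J$ through $i_G$, so the injectivity statement is genuinely about subgroups of one ambient $N$. Second, the normalization hypothesis $\lambda(G) \leq Norm_B(P)$ enters not in the injectivity argument itself but implicitly through Theorem \ref{commonorbit}, since it is what makes $H_P$ a Hopf subalgebra and $K^{H_P}$ a field; Corollary \ref{orbitcoset}, giving $Orb_P(g) = gJ$ for general $g \in G$, is not strictly needed for injectivity but is what makes the subgroup label for $\Psi(P)$ most transparent.
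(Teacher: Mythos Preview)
Your proof is correct, but your route to injectivity differs from the paper's. The paper does not give a separate proof environment for this corollary; instead it prefaces the statement with the sentence ``given that sub-Hopf algebras of $H_N$ correspond injectively to intermediate fields $k\subseteq F\subseteq K$, which, in turn, corresponds bijectively to subgroups of $J\leq G$, we also have\ldots''. In other words, the paper obtains injectivity of $\Psi$ by factoring it as $P\mapsto H_P\mapsto K^{H_P}\mapsto J$, using the injectivity of the $\operatorname{Fix}$ map from the Chase--Sweedler/Greither--Pareigis theorem together with classical Galois theory, and then invoking Theorem~\ref{commonorbit} to identify the resulting $J$ with $Orb_P(i_G)$.

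Your argument is instead purely combinatorial: two subgroups $P_1,P_2\leq N$ with the same orbit through $i_G$ must coincide because the map $n\mapsto n(i_G)$ from the regular group $N$ to $G$ is a bijection. This is more elementary and avoids the Hopf-algebra machinery entirely for the injectivity step; it also makes explicit (as you note with the $\lambda(J)$ versus $\rho(J)$ example) that the injectivity really does depend on $P_1,P_2$ living in a single fixed regular $N$, something the paper's formulation leaves implicit. The paper's approach, on the other hand, situates the injectivity within the broader Hopf--Galois correspondence and makes clear that $\Psi$ is literally the composite of the $\operatorname{Fix}$ map with classical Galois theory. Both arguments are valid; yours is shorter and self-contained, the paper's is more conceptual.
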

This is intriguing in that we know that $P$ {\it must} correspond to $J=Gal(K/K^{H_P})$, but the precise relationship (as subgroups of $\operatorname{Perm}(G)$) between $P$ and $\lambda(J)$ is still a bit mysterious. Of course, $\Psi({e_N})=\{i_G\}$ and $\Psi(N)=G$ but when looking at different specific examples by hand or using GAP \cite{GAP4}, it does not seem that one can recover $J$ in any kind of obvious way from $P$ as say a normalizer or centralizer of something in $\operatorname{Perm}(G)$. Rather the correspondence here is seemingly based on purely combinatorial considerations, namely that the elements of $P$ must give rise to a bijection (at the level of sets) from $J$ to itself within $\operatorname{Perm}(G)$. This will be of key importance in examining the relationship between the actions of $\lambda(G)$ and $\lambda(J)$ as compared to the actions of $N$ and $P$, in particular the role that $P$ being normal in $N$ plays, as well as the differences that arise when $J\triangleleft G$ versus when it isn't.\par  
At this stage, there are two natural questions to ask. Is $\Psi$ onto, and for each such $P\leq N$, is $\Psi(P)\cong P$?
From computations done using GAP in degree $42$, based on the second author's work in \cite{Kohl2013}, the answer to both question is {\it no}. The 6 groups of order 42 are $C_{42}$, $C_7\times D_3$, $(C_7\rtimes C_3)\times C_2$, $C_3\times D_7$, $D_{21}$, and $(C_7\rtimes C_3)\rtimes C_2$. One of the reasons for looking at this class of examples is that they are well understood, and also, since $42$ is the product of three primes, there is a lot more variety in the number of isomorphism classes of subgroups. The distribution of $N$ for each $G$ (where the column entries are $G$ and the rows are the number of $N$ of the given isomorphism class) are summarized below, where we show (in $\{\}$) how many of the given $N$ give rise to $\Psi$ which are onto.\par\vskip0.125in
\centerline{\begin{tabular}{|c|c|c|c|c|c|c|}\hline
$G\downarrow\setminus N\rightarrow$ & $C_{42}$ & $C_7\times D_3$ & $C_2\times (C_7\rtimes C_3)$ & $C_3\times D_7$& $D_{21}$ & $(C_7\rtimes C_3)\rtimes C_2$\\ \hline
$C_{42}$ & $1\ \{1\}$ & $2\ \{1\}$ & $4\ \{2\}$ & $2\ \{1\}$ & $4\ \{1\}$ & $4\ \{2\}$\\ \hline
$C_7\times D_3$& $3\ \{0\}$ & $2 \{1\}$ & $0$ & $6\ \{0\}$ & $4\ \{1\}$ & $0$\\ \hline
$C_2\times (C_7\rtimes C_3)$& $7\ \{0\}$ & $14\ \{0\}$ & $16\ \{1\}$ & $14\ \{0\}$ & $28\ \{0\}$ & $28\ \{0\}$\\ \hline
$C_3\times D_7$& $7\ \{0\}$ & $14\ \{0\}$ & $28\ \{0\}$ & $2\ \{1\}$ & $4\ \{1\}$ & $28\ \{0\}$\\ \hline
$D_{21}$& $21\ \{0\}$ & $14\ \{0\}$ & $0$ & $6\ \{0\}$ & $4\ \{1\}$ & $0$\\ \hline
$(C_7\rtimes C_3)\rtimes C_2$& $7\ \{0\}$ & $14\ \{0\}$ & $28\ \{0\}$ & $14\ \{0\}$ & $28\ \{0\}$ & $16\ \{1\}$\\ \hline
\end{tabular}}\par
For $N=\rho(G)$ we have that every subgroup of $N$ is normalized by $\lambda(G)$ since $\rho(G)$ and $\lambda(G)$ centralize each other. In this case $\Psi$ is trivially onto, and the above table shows that frequently $\Psi$ is onto {\it only} for $N=\rho(G)$. Other than the fact that $|P|=|\Psi(P)|$, the groups $P$ and $J$ can be in different isomorphism classes. For example, when $G\cong C_{42}$ there is an $N\cong C_3\times D_7$ with $P\leq N$ where $P\cong C_{14}$ but $\Psi(P)\cong D_7$. In the final section we shall summarize more information about the degree 42 cases.\par
\subsection{Semi-regular subgroups and orbits}\par
As mentioned above, $P\leq N\leq \operatorname{Perm}(G)$, normalized by $\lambda(G)\leq B$ gives rise to $J=\Psi(P)$ where, since $N$ is regular, we have that $P$ is semi-regular but not transitive. Moreover, we also have that $\lambda(J)\leq\lambda(G)\leq \operatorname{Perm}(G)$ also normalizes $P$. And, as also mentioned earlier, $F\otimes H_P\cong (K[P])^{\lambda(J)}$ due to the fact $K/F$ is Hopf-Galois with respect to the action of a copy of $P$ embedded as a regular subgroup of $\operatorname{Perm}(J)$ normalized by $\lambda(J)\leq \operatorname{Perm}(J)$. These two views are tied together by Thm. \ref{commonorbit} which implies that $P\leq \operatorname{Perm}(G)$ restricted to $J$ yields this aforementioned regular subgroup of $\operatorname{Perm}(J)$. The virtue of this is that it mirrors the fact that $\lambda(G)$ restricted to $J$ is naturally equal to $\lambda(J)\leq \operatorname{Perm}(J)$. In order to construct a Hopf-Galois structure on $F/k$ we shall assume from this point onward that $P\triangleleft N$. What this yields for us is a way to view $\lambda(G)$, $\lambda(G)/\lambda(J)$ (if $J\triangleleft G$), and $N/P$ as subgroups of a fixed $\operatorname{Perm}(S)$ for a specifically chosen set $S$, where $N/P$ acts regularly and $\lambda(G)/\lambda(J)$ normalizes this regular subgroup. This will lead naturally to a Hopf-Galois structure on $F/k$ with associated group $N/P$. If $J$ is not a normal subgroup then we can still construct a Hopf-Galois structure on $F/k$ corresponding to $N/P$ but the argument will be somewhat different.\par
We start by establishing some additional notation, namely 
\begin{itemize}
\item $[K:F]=|J|=|P|=p$
\item $[F:k]=[G:J]=[N:P]=m$ 
\item $[K:k]=|N|=|G|=mp=n$
\end{itemize}
where $p$ is not necessarily a prime. Our first step is to find this common set on which $N/P$ and $G/J$ both act. Again, we shall initially assume that $J\triangleleft G$ and later on relax this condition. However, we will choose the sets/actions so that in the non-normal case, we won't have to modify the arguments that much. With Cor. \ref{orbitcoset} in mind, together with the fact that $G$ acts naturally on the left cosets of $J$ in $G$, even if $J$ is not normal in $G$, we have 
$$
G=g_1J\cup g_2J\cup\dots\cup g_mJ,
$$
where $\{g_1,\dots,g_m\}$ are a transversal of $J$ in $G$, where we may assume $g_1=i_G$. We start with the following basic fact.
\begin{lemma}
\label{regblock}
For $G$, $J$, $N$ and $P$ as above, the quotient groups $\lambda(G)/\lambda(J)$ and $N/P$ act on the cosets $\{g_1J,\dots,g_mJ\}$ as regular permutation groups.
\end{lemma}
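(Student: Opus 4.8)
The plan is to handle the two quotient groups in parallel: in each case I will check that the prescribed action on $\{g_1J,\dots,g_mJ\}$ is well defined, then that it is transitive, and finally deduce regularity from a cardinality count. The count is the engine of the whole argument: a transitive action of a group of order $m$ on a set of size $m$ has, by orbit--stabilizer, trivial point stabilizers, hence is free, hence regular. So for each quotient it suffices to produce a well-defined transitive action on a set of size $m$.

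For $\lambda(G)/\lambda(J)$ I would argue as follows. The group $\lambda(G)$ already acts on $G$, and it carries the block $g_iJ$ to $gg_iJ$, so it permutes $\{g_1J,\dots,g_mJ\}$; transitivity on these blocks is inherited from transitivity on $G$. The subgroup $\lambda(J)$ lies in the kernel of this action, because for $j\in J$ and any transversal element $g_i$ we have $jg_iJ=g_i(g_i^{-1}jg_i)J=g_iJ$, using the standing hypothesis $J\triangleleft G$ to know $g_i^{-1}jg_i\in J$. Hence the action descends to $\lambda(G)/\lambda(J)$, which has order $[G:J]=m$, and the cardinality count above gives regularity. (In fact an element of $\lambda(G)$ fixing the coset $g_1J=J$ already lies in $\lambda(J)$, so the kernel is exactly $\lambda(J)$ and $\lambda(G)/\lambda(J)$ acts faithfully — this is just the usual left regular representation of $G/J$.)

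For $N/P$ I would invoke Corollary \ref{orbitcoset}, which identifies the orbits of $P$ acting on $G$ with precisely the cosets $\{g_1J,\dots,g_mJ\}$. Since $P\triangleleft N$, conjugation by any $n\in N$ sends $P$-orbits to $P$-orbits — explicitly $n(Orb_P(g))=Orb_{nPn^{-1}}(n(g))=Orb_P(n(g))$ — so $N$ permutes the set $\{g_1J,\dots,g_mJ\}$, and $P$ lies in the kernel of this action because it fixes each of its own orbits setwise. Thus the action descends to $N/P$, of order $[N:P]=m$; it is transitive because $N$ acts transitively on $G$ and hence on the $P$-orbits; and the cardinality count again yields regularity. (Here too the kernel is exactly $P$: if $n\in N$ fixes the orbit $Orb_P(i_G)$ then $n(i_G)=q(i_G)$ for some $q\in P$, so $q^{-1}n$ fixes $i_G$ and therefore $n=q\in P$ by fixed-point-freeness of the regular action of $N$.)

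I do not expect a genuine obstacle: the only thing needing care is bookkeeping of which standing hypothesis is used where — normality of $J$ in $G$ for the first quotient, normality of $P$ in $N$ together with Corollary \ref{orbitcoset} for the second — and the observation that when both normality hypotheses hold the two actions visibly agree on $\{g_1J,\dots,g_mJ\}$ (a transversal element $g_i$ is moved to $gg_iJ$ in both cases, once by $\lambda(g)$ and once by the unique $n\in N$ with $n(i_G)=g$, modulo the respective kernels). Recording that compatibility is worthwhile, since it is exactly what will be exploited when constructing the Hopf-Galois structure on $F/k$.
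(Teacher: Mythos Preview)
Your proof of the lemma is correct and follows essentially the same route as the paper: verify the action on $\{g_iJ\}$ is well defined (using $J\triangleleft G$ for the first quotient and Corollary~\ref{orbitcoset} together with $P\triangleleft N$ for the second), inherit transitivity from transitivity on $G$, and conclude regularity from the order count $|G/J|=|N/P|=m$.

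One caution about your closing parenthetical: the claim that the two actions ``visibly agree'' on $\{g_iJ\}$ is not correct. The unique $n\in N$ with $n(i_G)=g$ need not satisfy $n(g_i)=gg_i$, so $N/P$ and $\lambda(G)/\lambda(J)$ generally give \emph{different} regular subgroups of $\operatorname{Perm}(\{g_iJ\})$ (already for $N=\rho(G)$ one gets $\rho(G/J)$ versus $\lambda(G/J)$). What is true, and what the paper establishes next in Theorem~\ref{QnormQ}, is that $\lambda(G)/\lambda(J)$ \emph{normalizes} $N/P$; this normalization---not equality---is what feeds into the construction of the Hopf--Galois structure on $F/k$. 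Your proof of the lemma itself is unaffected, but you should drop or correct that final remark.
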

\begin{proof}
We start by observing that $\lambda(G)$ acts transitively on $\{g_iJ\}$ due to $\lambda(G)$ being regular, hence transitive on $G$ itself. Similarly,  since $J\triangleleft G$ then $\lambda(G)/\lambda(J)$ also acts transitively on $\{g_iJ\}$ since given $g_iJ$ and $g_jJ$, then for $h\in J$, 
\begin{align*}
\lambda(g_jg_i^{-1}h)(g_iJ)&=g_jg_i^{-1}hg_iJ\\
                          &=g_jg_i^{-1}g_ih'J\text{ for some $h'\in J$ by normality}\\
                          &=g_jJ\\
\end{align*}
and since $|G/J|=m$ then this transitive action of $\lambda(G)/\lambda(J)$ is regular. If $n\in N$ then by Cor. \ref{orbitcoset}
\begin{align*}
n(gJ)&=\{n(q(g))\ |\ q\in P\}\\
     &=\{q(n(g))\ |\ q\in P\}\text{ since $P\triangleleft G$}\\
     &=n(g)J\\
\end{align*}
which means $N$ operates transitively on the cosets, so that $N/P$ acts transitively and therefore regularly since $[N:P]=[G:J]$.
\end{proof}
Before proceeding further it is worth noting how the normality of the given subgroups $J$ and $P$ strongly impacts the choice of object on which both $\lambda(G)/\lambda(J)$ and $N/P$ act. By itself, $\lambda(J)$ gives rise to orbits which are precisely the {\it right} cosets of $J$ in $G$, and, as seen above, the orbits of $P$ on $G$ are precisely the left cosets of $J$ in $G$. When $J$ is normal in $G$ these coincide of course, but if $J$ is not normal in $G$ then $G$ would still act naturally on its left cosets, i.e. the orbits of $P$. Note also, in the second half of the above proof that the transitivity of the action of $N$ on the orbits of $P$ uses the normality of $P$ in $N$. This is partly why we require that $P\triangleleft N$ but allow for the possibility of $J$ not being normal in $G$. The fact that $N$ and $P$ are normalized by $\lambda(G)$ yields the following.
\begin{theorem}
\label{QnormQ}
For $\lambda(J)\triangleleft\lambda(G)$ and $P\triangleleft N$ as above, $\lambda(G)/\lambda(J)$ normalizes $N/P$ as a subgroup of $\operatorname{Perm}(\{g_iJ\})$.
\end{theorem}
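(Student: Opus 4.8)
The plan is to obtain both $\lambda(G)/\lambda(J)$ and $N/P$ as images of one and the same ``restriction to blocks'' homomorphism, and then to push the relation $\lG\leq Norm_B(N)$ through it. Let $W\leq B=\operatorname{Perm}(G)$ be the setwise stabilizer of the partition $\{g_1J,\dots,g_mJ\}$ of $G$ into left cosets of $J$; that is, $W$ consists of those $\sigma\in B$ which carry each coset $g_iJ$ onto some coset $g_jJ$. Restriction to blocks then defines a group homomorphism $\pi:W\to\operatorname{Perm}(\{g_iJ\})$ by $\pi(\sigma)(g_iJ)=\sigma(g_iJ)$; that this is a homomorphism is the usual fact that the blockwise action of a composite is the composite of the blockwise actions.

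First I would verify that $\lambda(G)\leq W$ and $N\leq W$. The inclusion $\lambda(G)\leq W$ is immediate because $\lambda(h)(g_iJ)=hg_iJ$ is again a left coset of $J$. The inclusion $N\leq W$ is precisely the computation used in the proof of Lemma \ref{regblock}: by Corollary \ref{orbitcoset} and $P\triangleleft N$ one has $n(g_iJ)=n(Orb_{P}(g_i))=Orb_{P}(n(g_i))=n(g_i)J$, again a coset of $J$. Next I would pin down the two kernels. An element $\lambda(h)$ lies in $\ker(\pi|_{\lambda(G)})$ iff $hg_iJ=g_iJ$ for every $i$, i.e. iff $h\in\bigcap_{g\in G}gJg^{-1}$, which equals $J$ exactly because $J\triangleleft G$; hence $\pi(\lambda(G))\cong\lambda(G)/\lambda(J)$, and this is the copy of $\lambda(G)/\lambda(J)$ acting on $\{g_iJ\}$ supplied by Lemma \ref{regblock}. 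For $N$, Corollary \ref{orbitcoset} gives $P\leq\ker(\pi|_N)$, and since $N/P$ already acts faithfully (indeed regularly) on $\{g_iJ\}$ by Lemma \ref{regblock}, we get $\ker(\pi|_N)=P$ and $\pi(N)$ is exactly the copy of $N/P$ inside $\operatorname{Perm}(\{g_iJ\})$.

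With this dictionary established, the conclusion is a one-line diagram chase. Since $\lG$ normalizes $N$ in $B$ we have $\lambda(g)N\lambda(g)^{-1}=N$ for every $g\in G$, and $\lambda(g)$, $\lambda(g)^{-1}$ and every element of $N$ lie in $W$, so we may apply $\pi$ to obtain
$$
\pi(\lambda(g))\,\pi(N)\,\pi(\lambda(g))^{-1}=\pi\bigl(\lambda(g)N\lambda(g)^{-1}\bigr)=\pi(N).
$$
As $g$ ranges over $G$ this says exactly that $\pi(\lambda(G))=\lambda(G)/\lambda(J)$ normalizes $\pi(N)=N/P$ inside $\operatorname{Perm}(\{g_iJ\})$, which is the assertion.

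I expect the only delicate point to be the bookkeeping of the second paragraph: making sure $\pi$ really is defined and multiplicative on the subgroup generated by $\lambda(G)$ and $N$ (so that applying it to the conjugation identity is legitimate), and correctly identifying the two kernels. None of this requires a new idea --- it is a repackaging of Corollary \ref{orbitcoset} and Lemma \ref{regblock} --- but it is where the two normality hypotheses are consumed: $P\triangleleft N$ is what puts $N$ inside $W$ and makes $N/P$ act faithfully, while $J\triangleleft G$ is exactly what forces $\ker(\pi|_{\lambda(G)})$ to be $\lambda(J)$ rather than the possibly smaller normal core of $J$ in $G$, so both are genuinely needed for the statement as phrased.
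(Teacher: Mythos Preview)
Your argument is correct and is a genuinely different packaging of the same ingredients. The paper proves the theorem by an explicit element-level computation: it takes a general element $\lambda(\gamma h_1)\,nq\,\lambda(\gamma^{-1}h_2)$ of $(\lambda(\gamma J))(nP)(\lambda(\gamma^{-1}J))$, applies it to $g_iJ$, and simplifies step by step (using $J\triangleleft G$ twice and the triviality of $P$ on cosets once) to obtain $[\gamma\,n(\gamma^{-1}g_i)\,J]$, thereby reading off the formula $\lambda(\gamma J)(nP)\lambda(\gamma^{-1}J)=\lambda(\gamma)n\lambda(\gamma)^{-1}P\in N/P$. You instead introduce the block stabilizer $W\leq B$ and the natural homomorphism $\pi:W\to\operatorname{Perm}(\{g_iJ\})$, check that both $\lambda(G)$ and $N$ lie in $W$ with kernels $\lambda(J)$ and $P$ respectively, and then push the normalizer relation $\lambda(G)\leq Norm_B(N)$ through $\pi$ in one line.

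What each buys: the paper's computation hands you the explicit conjugation formula directly, and the paper later leans on that computation twice --- to see that $\lambda(J)$ acts trivially on $N/P$ (for the short exact sequence theorem) and to handle the non-normal $J$ case by setting $h_1=h_2=i_G$. Your approach recovers both of these just as easily (triviality of $\lambda(J)$ on $N/P$ is exactly $\lambda(J)\subseteq\ker\pi$, and in the non-normal case $\lambda(G)$ still lies in $W$, with kernel the core of $J$ rather than $J$ itself), and it makes the role of the two normality hypotheses more transparent: $P\triangleleft N$ is what puts $N$ into $W$, and $J\triangleleft G$ is what makes $\ker(\pi|_{\lambda(G)})$ all of $\lambda(J)$.
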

\begin{proof}
Let $n\in N$, $q\in P$, $\gamma\in G$, $h_1,h_2\in J$ and consider 
$\lambda(\gamma h_1)n q\lambda(\gamma^{-1}h_2)\in (\lambda(\gamma J)(nP)(\lambda(\gamma^{-1} J))$ and observe that
\begin{align*}
\lambda(\gamma h_1)n q\lambda(\gamma^{-1}h_2)[g_iJ]&=\lambda(\gamma h_1)n q [\gamma^{-1}h_2g_iJ]\\
                                                          &=\lambda(\gamma h_1)n q[\gamma^{-1}g_iJ]\text{ since $J\triangleleft G$} \\
                                                          &=\lambda(\gamma h_1)[n(\gamma^{-1}g_i)J] \text{ since $P$ acts trivially on $\{g_iJ\}$}\\
                                                          &=\lambda(\gamma) h_1[n (\gamma^{-1} g_i)J)] \\
                                                          &=\lambda(\gamma) [n(\gamma^{-1} g_i)J]\text{ again since $J\triangleleft G$}  \\
                                                          &=[\gamma n(\gamma^{-1} g_i)J].\\
\end{align*}
The last set above is therefore $\lambda(\gamma)n\lambda(\gamma^{-1})P[g_iJ]$. That is 
$$\lambda(\gamma J)(nP)\lambda(\gamma^{-1}J)=\lambda(\gamma)n\lambda(\gamma^{-1})P\in N/P$$ 
describes the action of $\lambda(G)/\lambda(J)$ on $N/P$ in $\operatorname{Perm}(\{g_iJ\})$.
\end{proof}
\noindent We can now prove the following.
\begin{theorem}
\label{mainFknormal}
The extension $F/k$ is Hopf-Galois with respect to the Hopf algebra $H_{N/P}=(F[N/P])^{\lambda(G)/\lambda(J)}$.
\end{theorem}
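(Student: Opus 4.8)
The plan is to recognize Theorem~\ref{mainFknormal} as a direct application of the Greither--Pareigis theorem \cite[Thm. 2.1]{GreitherPareigis1987} to the extension $F/k$, once the hypotheses have been matched up with the structural results already established. First I would record that, under the standing assumption $J\triangleleft G$, the extension $F/k$ is itself classically Galois with $\Gal(F/k)\cong G/J$; hence its normal closure is $F$ itself, the relevant $G$-set in the Greither--Pareigis setup is $S'=G/J=\{g_1J,\dots,g_mJ\}$, the ambient group is $B'=\operatorname{Perm}(\{g_iJ\})$, and $G/J$ embeds in $B'$ via its left regular representation $\gamma J\mapsto(g_iJ\mapsto\gamma g_iJ)$.

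The crux of the argument is the observation that this left regular representation of $G/J$ coincides with the action of $\lambda(G)/\lambda(J)$ on $\{g_iJ\}$ considered in Lemma~\ref{regblock}, since $\lambda(\gamma J)(g_iJ)=\gamma g_iJ$; thus $\lambda(G)/\lambda(J)$ is literally the copy of $G/J$ sitting inside $B'$ relative to which the correspondence is to be taken. With that identification in hand, Lemma~\ref{regblock} shows that $N/P$ is a regular subgroup of $B'$, and Theorem~\ref{QnormQ} shows that it is normalized by $\lambda(G)/\lambda(J)$ inside $B'$ --- together exactly condition (b) of the Greither--Pareigis theorem applied to $F/k$. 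Invoking \cite[Thm. 2.1]{GreitherPareigis1987} then yields a Hopf--Galois structure on $F/k$, and the descent-theoretic recipe recalled in the introduction (with $\tilde F=F$ because $F/k$ is already normal) identifies its Hopf algebra as $(F[N/P])^{\lambda(G)/\lambda(J)}=H_{N/P}$, acting on the copy $F\cong\{\sum_i(g_iJ)(y)e_{g_iJ}\ |\ y\in F\}\subseteq\operatorname{Map}(G/J,F)$ by $nP(e_{g_iJ})=e_{nP(g_iJ)}$.

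Since essentially all of the real content has been front-loaded into Lemma~\ref{regblock}, Corollary~\ref{orbitcoset}, and Theorem~\ref{QnormQ}, I do not expect a serious obstacle here; the delicate points are purely bookkeeping: confirming that the quotient $N/P$ genuinely acts \emph{as a permutation group on $\{g_iJ\}$} (which is where $P\triangleleft N$ enters, via Corollary~\ref{orbitcoset}), that regularity --- not merely the semi-regularity automatic for subgroups --- holds, which relied on the index equality $[N:P]=[G:J]=m$, and that the conjugation action defining the $\lambda(G)/\lambda(J)$-descent on $F[N/P]$ is the one exhibited in the proof of Theorem~\ref{QnormQ}. Alternatively, if one prefers not to cite Greither--Pareigis in the quotient setting and instead mirror the explicit computation carried out earlier for $K/k$, one could verify the Hopf--Galois axioms by hand: show $(F[N/P])^{\lambda(G)/\lambda(J)}$ has $k$-dimension $m$, that it acts on $\operatorname{Map}(G/J,F)^{\lambda(G)/\lambda(J)}\cong F$ with fixed field $k$, and that the induced map $1\#\mu$ is bijective. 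This second route would, however, simply reproduce the Greither--Pareigis argument in this special case, so the first approach seems preferable.
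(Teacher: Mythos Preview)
Your proposal is correct and follows essentially the same route as the paper: invoke Lemma~\ref{regblock} for the regularity of $N/P$ on $\{g_iJ\}$, Theorem~\ref{QnormQ} for the normalization by $\lambda(G)/\lambda(J)$, and then apply the Greither--Pareigis machinery to the Galois extension $F/k$ (with $\tilde F=F$). Your explicit identification of $\lambda(G)/\lambda(J)$ with the left regular representation of $G/J$ inside $\operatorname{Perm}(\{g_iJ\})$ is a helpful clarification that the paper leaves implicit, but the argument is otherwise the same.
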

\begin{proof}
By Lemma \ref{regblock} $\lambda(G)/\lambda(J)$ and $N/P$ act on $\{g_iJ\}$ (which are the cosets of $J$ in $G$) where $N/P$ is a regular subgroup of $\operatorname{Perm}(\{g_iJ\})$. This implies that $\operatorname{Map}(G/J,F)/F$ is Galois with respect to $N/P$ and therefore that 
$$(\operatorname{Map}(G/J,F))^{G/J}/F^{G/J}\leftrightarrow F/k$$
is Galois with respect to $N/P$ as well. As such by Thm. \ref{QnormQ}, $H_{N/P}$ acts on $F/k$ to make it Hopf-Galois 
\end{proof}
\subsection{$J$ not normal in $G$}\par
What happens when $J$ is not normal in $G$? Even though we assume $P\triangleleft N$, there is no reason to expect that $J=\Psi(P)$ given in Cor. \ref{PtoJ} is normal in $G$. Using GAP we generated the following example in degree 24:
\begin{align*}
G&=\langle ( 1, 2)( 3,13)( 4, 8)( 5, 7)( 6, 9)(10,21)(11,20)(12,16)(14,18)(15,17)(19,24)(22,23),\\
&( 1, 3, 9)( 2, 6,13)( 4,11,23)( 5,19,17)( 7,15,24)( 8,22,20)(10,18,12)(14,21,16),\\
&( 1, 4)( 2, 7)( 3,10)( 5,12)( 6,14)( 8,16)( 9,17)(11,19)(13,20)(15,22)(18,23)(21,24)\\
&( 1, 5)( 2, 8)( 3,11)( 4,12)( 6,15)( 7,16)( 9,18)(10,19)(13,21)(14,22)(17,23)(20,24)\rangle\\
&\cong S_4. \\
\text{ Then } N&=\langle ( 1, 6, 9, 2, 3,13)( 4,15,23, 7,11,24)( 5,22,17, 8,19,20)(10,21,12,14,18,16),\\
&( 1, 9, 3)( 2,13, 6)( 4,23,11)( 5,17,19)( 7,24,15)( 8,20,22)(10,12,18)(14,16,21),\\
&( 1, 4)( 2, 7)( 3,10)( 5,12)( 6,14)( 8,16)( 9,17)(11,19)(13,20)(15,22)(18,23)(21,24),\\
&( 1,12)( 2,16)( 3,19)( 4, 5)( 6,22)( 7, 8)( 9,23)(10,11)(13,24)(14,15)(17,18)(20,21)\rangle\\
&\cong A_4\times C_2\\
\end{align*}
is regular and normalized by $G$, and $N$ has a normal subgroup $P$\par\vskip0.125in
\begin{align*}
P&=\langle ( 1, 4)( 2, 7)( 3,10)( 5,12)( 6,14)( 8,16)( 9,17)(11,19)(13,20)(15,22)(18,23)(21,24),\\
&( 1, 5)( 2, 8)( 3,11)( 4,12)( 6,15)( 7,16)( 9,18)(10,19)(13,21)(14,22)(17,23)(20,24),\\
&( 1, 2)( 3, 6)( 4, 7)( 5, 8)( 9,13)(10,14)(11,15)(12,16)(17,20)(18,21)(19,22)(23,24)\rangle\\
&\cong C_2\times C_2 \times C_2\\
\end{align*}
which is also normalized by $G$, but $G$ has no normal subgroups of order 8. Now, working with this $G\leq S_{24}$ as opposed to $\lambda(G)\leq \operatorname{Perm}(G)$ it is not so easy to directly apply Cor. \ref{PtoJ} to get $J=Orb_{P}(i_G)$. However, one can show that $G$ above (being isomorphic to $S_4$) contains only three subgroups of order 8, all isomorphic to $D_4$. If we make the identification $1\leftrightarrow i_G$ then we can infer which is {\it the} $J=\Psi(P)$. To see this, one first observes that $P$ gives rise to the following orbits 
\begin{align*}
&\{ 1, 2, 4, 5, 7, 8, 12, 16\}\\
&\{ 3, 6, 10, 11, 14, 15, 19, 22 \}\\
&\{ 9, 13, 17, 18, 20, 21, 23, 24 \}\\
\end{align*}
and for the three subgroups of order 8, $J_1$, $J_2$ and $J_3$ we have orbits
 \begin{align*}
J_1 &\rightarrow  \{ 1,2,4,5,7,8,12,16\},\{ 3,10,11,13,19,20,21,24 \},\{6,9,14,15,17,18,22,23\}\\
J_2 &\rightarrow  \{ 1,4,5,6,12,14,15,22\}, \{ 2,3,7,8,23,10,11,16,19\}, \{9,13,17,18,20,21,24\}\\
J_3 &\rightarrow  \{ 1,4,5,12,13,20,21,24\}, \{2,7,8,9,16,17,18,23\}, \{3,6,10,11,14,15,19,22\}\\
\end{align*}
which, again identifying $1$ with $i_G$ implies that $J_1=\Psi(P)$ since the orbit containing 1 that matches $[ 1,2,4,5,7,8,12,16]$ is that belonging to $J_1$ above. What one also notices is that the other two orbits 
$$\{ 3,10,11,13,19,20,21,24 \}\text{ and }\{6,9,14,15,17,18,22,23\}$$ 
do not match 
$$\{ 3, 6, 10, 11, 14, 15, 19, 22 \}\text{ and }\{ 9, 13, 17, 18, 20, 21, 23, 24 \}$$
above arising from $P$, which is, of course, due to the fact that the orbits of $J$ (resp. $P$) are the right (resp. left) cosets of $J$ in $G$.\par 
The case where $J$ is non normal in $G$ synchronizes naturally with the initial setup of Greither-Pareigis as mentioned earlier.
They start with a non-Galois extension and then look to the normal closure of this extension in order to construct a Hopf-Galois structure if possible on the original extension. In the case where $J$ is not normal in $G$, the extension $F/k$ is not classically Galois of course, but we will show that it is Hopf-Galois, which begins by looking to its normal closure. The normal closure $\tilde F/k$ has as its Galois group $G/I$ where $I=\cap_{g\in G}gJg^{-1}$, and clearly $\tilde F\subseteq K$ as diagrammed below. 
\begin{displaymath}
    \xymatrix@R=1em{
     K\ar@{-}[dr]^{I} \ar@{-}[dddd]_{G}\ar@/^5pc/[ddrr]^{J}            &        \\ 
                      & \tilde F \ar@{-}^{G/I}[dddl]\ar@{-}^{J/I}[dr]&        \\
                      &                              &   F  \ar@{-}[ddll]  \\
        &                              &        \\
     k                &                              &        \\    
}
\end{displaymath}
Looking back at the basic setup in \cite{GreitherPareigis1987}, principally \cite[Lemma 1.1 and Lemma 1.2]{GreitherPareigis1987} we have that $S$ (the quotient of the respective Galois groups) is going to be $(G/I)/(J/I)$. This quotient of quotients looks formidable, but there is a natural identification of it with $G/J$ where, of course, $|G/J|=|N/P|=[F:k]$. Following this development further leads to the question of finding a regular subgroup  $Q\leq \operatorname{Perm}(S)$ which leads to an action making $\operatorname{Map}(S,\tilde F)/\tilde F$ Hopf-Galois with respect to $\tilde F[Q]$. If $Q$ is normalized by $\lambda(G)/\lambda(I)$ then $\operatorname{Map}(S,\tilde F)^{\lambda(G)/\lambda(I)}/\tilde F^{G/I}$ (i.e. $F/k$) is Hopf-Galois with respect to $(\tilde F[Q])^{G/I}$. However, we can replace $\tilde F$ with any field which contains it, most naturally $K$ since $Gal(K/k)$ maps onto $Gal(\tilde F/k)$. Indeed, if $I$ is trivial then $\tilde F=K$. Therefore we look for regular $Q\leq \operatorname{Perm}(S)$ normalized by $\lambda(G)$ giving rise to $H_Q=(K[Q])^{\lambda(G)}$. We may thereby dispense with the need to worry about $I$ at all since (as indicated earlier) $S$ can be identified with the (set) $G/J$.\par
By Lemma \ref{regblock} we have that $N/P$ acts regularly, and $G$ acts (transitively) on $S=\{g_iJ\}$. The only question is whether the analog of Thm. \ref{QnormQ} holds for the action of $G$ on $N/P$ within $\operatorname{Perm}(S)$. The answer is yes by simply setting $h_1=i_G$ and $h_2=i_G$ in $\lambda(\gamma h_1)$ and $\lambda(\gamma^{-1}h_2)$ in the proof. This yields the action $\lambda(\gamma)(\eta P)\lambda(\gamma)^{-1}=\lambda(\gamma)\eta\lambda(\gamma)^{-1}P$, so that, indeed, $\lambda(G)$ normalizes $N/P$ in $\operatorname{Perm}(S)$ which yields the following variation on Thm. \ref{mainFknormal}.
\begin{theorem}
\label{mainFknon-normal}
The extension $F/k$ is Hopf-Galois with respect to the Hopf algebra $H_{N/P}=(K[N/P])^{\lambda(G)}$ where $\lambda(G)\leq \operatorname{Perm}(S)$.
\end{theorem}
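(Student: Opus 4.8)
The plan is to run the Greither--Pareigis construction directly for the separable (possibly non-Galois) extension $F/k$, working with $K$ in place of the normal closure $\tilde F$. As explained just before the statement, the $G$-set attached to $F/k$ by \cite[Lemma 1.1]{GreitherPareigis1987} is $(G/I)/(J/I)$, and this is canonically identified with the $G$-set $S=\{g_1J,\dots,g_mJ\}$ of left cosets of $J$ in $G$: the $G$-action on $S$ is transitive, it factors through $\operatorname{Gal}(\tilde F/k)=G/I$, and it has the stabilizer of $g_1J=J$ equal to $J$. Since $\operatorname{Gal}(K/k)=G$ surjects onto $G/I$, $\tilde F\subseteq K$, and $K^G=\tilde F^{G/I}=k$, it is harmless to use $\operatorname{Map}(S,K)$ in place of $\operatorname{Map}(S,\tilde F)$.

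First I would record that $\operatorname{Map}(S,K)/K$ is a Galois ring extension with group $N/P$: by Lemma \ref{regblock}, $N/P\leq\operatorname{Perm}(S)$ is regular, and a regular subgroup of $\operatorname{Perm}(S)$ always makes $\operatorname{Map}(S,K)/K$ into a Galois extension of rings, hence Hopf-Galois for the group ring $K[N/P]$ acting by $(\eta P)(e_s)=e_{(\eta P)(s)}$.

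Next I would descend along $\lambda(G)$. This group acts on $K$ with fixed field $k$, on $\operatorname{Map}(S,K)$ through the $G$-set structure of $S$, and---this is where the standing hypothesis $P\triangleleft N$ enters---it normalizes $N/P$ inside $\operatorname{Perm}(S)$; the latter is exactly the computation in the proof of Theorem \ref{QnormQ} with $h_1=h_2=i_G$, yielding $\lambda(\gamma)(\eta P)\lambda(\gamma)^{-1}=\lambda(\gamma)\eta\lambda(\gamma)^{-1}P\in N/P$. Passing to $\lambda(G)$-fixed points gives $\operatorname{Map}(S,K)^{\lambda(G)}=\{\sum_{gJ\in S}g(x)e_{gJ}:x\in F\}\cong K^J=F$ while $K^{\lambda(G)}=k$, so Galois descent exhibits $(K[N/P])^{\lambda(G)}=H_{N/P}$ as a $k$-Hopf algebra that acts on $F/k$ and makes it Hopf-Galois. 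Equivalently, this is \cite[Thm. 2.1]{GreitherPareigis1987} applied to the datum consisting of the $G$-set $S$ together with the regular subgroup $N/P$ normalized by $\lambda(G)$. As a consistency check, $\dim_k H_{N/P}=|N/P|=[N:P]=[G:J]=[F:k]$, as it must be.

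The hard part will not be any isolated computation but the bookkeeping behind the two reductions of the first paragraph: identifying $(G/I)/(J/I)$ with $G/J$ as $G$-sets, replacing $\tilde F$ by $K$, and then checking carefully that the $\lambda(G)$-descent of $\operatorname{Map}(S,K)/K$ genuinely recovers $F/k$---equivalently, that $\operatorname{Map}(S,K)^{\lambda(G)}\cong F$. Once those identifications are secured, regularity of $N/P$ (Lemma \ref{regblock}) and its normalization by $\lambda(G)$ (extracted from Theorem \ref{QnormQ}) are precisely the hypotheses required by Greither--Pareigis, and the theorem follows.
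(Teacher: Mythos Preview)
Your proposal is correct and follows essentially the same approach as the paper: you identify $(G/I)/(J/I)$ with $S=G/J$, replace $\tilde F$ by $K$, invoke Lemma~\ref{regblock} for the regularity of $N/P$, and extract the normalization of $N/P$ by $\lambda(G)$ from the computation in Theorem~\ref{QnormQ} by setting $h_1=h_2=i_G$, exactly as the paper does in the discussion immediately preceding the statement. Your write-up adds a little extra detail (the explicit description of $\operatorname{Map}(S,K)^{\lambda(G)}\cong F$ and the dimension check), but the strategy and the key ingredients are identical.
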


\section{Short Exact Sequences}
As seen above, the analysis of how to construct Hopf-Galois structures on intermediate fields is dependent on when the relevant subgroups, $P$ and $J=\Psi(P)$ are normal or not. One basic consequence of normality at the level of groups is the existence of an exact sequence. Such an exact sequence gives rise to a sequence of group rings, which are Hopf algebras. We consider these sequences, as well as those corresponding to the fixed rings which are the Hopf-algebras which act on the different intermediate fields.\par
In ordinary Galois theory, if $K/k$ is Galois with $J\triangleleft G:=Gal(K/k)$ then $K^{J}/k$ is Galois with group $G/J$. One has an exact sequence of groups 
$$
1\rightarrow J\rightarrow G\rightarrow G/J\rightarrow 1
$$
where, since $J\triangleleft G$, then by \cite[Prop 4.14]{Ch00}, the induced Hopf-algebra maps $k[J]\rightarrow k[G]$ (resp. $k[G]\rightarrow k[G/J]$) are normal (resp. co-normal). As such, 
$$
k\rightarrow k[J]\rightarrow k[G]\rightarrow k[G/J]\rightarrow k
$$
is a short exact sequence of $k$-Hopf algebras.\par\vskip0.125in
But in terms of the actions on the relevant (intermediate) fields, these are not exactly the Hopf algebras that act to make the given field extensions Hopf-Galois. If we look at the Hopf-Galois actions induced by the Galois groups $J$, $G$, and $G/J$ then the Hopf algebras are group rings
\begin{align*}
K/K^{J}\text{ is acted on by }& (K[\rho(J)])^{\lambda(J)}\cong K^{J}[J]\\
\text{ where }&\rho(J),\lambda(J)\leq \operatorname{Perm}(J)\\
&\\
K/k\text{ is acted on by }& (K[\rho(G)])^{\lambda(G)}\cong k[G]\\
&\\
K^{J}/k\text{ is acted on by }&(K^{J}[\rho(G/J)])^{\lambda(G/J)}\cong k[G/J]\\
\text{ where }&\lambda(G/J),\rho(G/J)\leq \operatorname{Perm}(G/J)\\
\end{align*}
The latter two Hopf algebras are defined over $k$ but the first is not, which is not unexpected since it is acting with respect to the ground field $K^{J}$.\par\vskip0.125in
Since $\rho(J)$ is normalized by $\rho(G)$ and $\lambda(G)$ then $(K[\rho(J)])^{\lambda(G)}$ is a Hopf sub-algebra of $(K[\rho(G)])^{\lambda(G)}$ by \cite[Thm. 7.6]{ChaseSweedler1969}, and $$(K[\rho(J)])^{\lambda(J)}\cong K^{J}\otimes (K[\rho(J)])^{\lambda(G)}$$.\par
Part of the simplicity of the classical case above is that the Hopf-Algebras are group rings. For $K/k$ Hopf-Galois under the action of $H_N=(K[N])^{\lG}$, where $N$ is a regular subgroup of $\operatorname{Perm}(G)$ normalized by $\lambda(G)$, we would like to consider $P\triangleleft N$, also normalized by $\lambda(G)$ and the Hopf-Galois structures arising from $P$ and $N/P$. Since $P$ is normalized by $N$, then, as discussed earlier $H_P=(K[P])^{\lG}$ is a $k$-sub-Hopf algebra of $H_N=(K[N])^{\lG}$ which fixes a subfield $F$, and that $F\otimes H_P$ acts to make $K/F$ Hopf-Galois, and by Thm. \ref{mainFknormal} $F/k$ is Hopf-Galois with respect to $H_{N/P}=(F[N/P])^{\lambda(G)/\lambda(J)}$. As such, we have the following.
\begin{theorem}
The following sequence of Hopf-algebras 
$$
1\rightarrow (K[P])^{\lambda(G)}\rightarrow (K[N])^{\lambda(G)}\rightarrow (F[N/P])^{\lambda(G)/\lambda(J)}\rightarrow 1
$$
is exact.
\end{theorem}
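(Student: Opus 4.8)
The plan is to deduce exactness from the classical group-ring case by faithfully flat descent along $K/k$. After base change by $K$, the three Hopf algebras should become the group rings $K[P]$, $K[N]$, $K[N/P]$ and the sequence should become
$$K\to K[P]\to K[N]\to K[N/P]\to K,$$
which is the Hopf-algebraic shadow of $1\to P\to N\to N/P\to 1$: for $P\triangleleft N$ the inclusion $K[P]\hookrightarrow K[N]$ is a normal Hopf subalgebra, the canonical projection $K[N]\twoheadrightarrow K[N/P]$ is surjective and conormal, $K[N/P]\cong K[N]/K[N]I_P$, and $K[P]\cong K[N]^{\operatorname{co}K[N/P]}$, where $I_P$ is the augmentation ideal of $K[P]$. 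This part is standard; the work is in setting up the descent isomorphisms compatibly with the maps in the sequence.

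First I would pin down the maps. The inclusion $H_P=(K[P])^{\lG}\hookrightarrow(K[N])^{\lG}=H_N$ is already available (it is a $k$-sub-Hopf-algebra because $P$ is normalised by $\lG$), and it base changes to $K[P]\hookrightarrow K[N]$. For the surjection I would take the $\lG$-equivariant projection $K[N]\twoheadrightarrow K[N/P]$ (equivariant since $\lG$ normalises $P$) and pass to $\lG$-invariants, obtaining a $k$-Hopf map $H_N=(K[N])^{\lG}\to(K[N/P])^{\lG}$; the point is to identify $(K[N/P])^{\lG}$ with $H_{N/P}=(F[N/P])^{\lambda(G)/\lambda(J)}$. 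For this I would first show that $\lambda(J)$ acts trivially on $N/P$ by conjugation, and this is where $J\triangleleft G$ enters. By Corollary \ref{orbitcoset} the $P$-orbits on $G$ are exactly the left cosets $\{g_iJ\}$, so by Lemma \ref{regblock} the image of $N$ in $\operatorname{Perm}(\{g_iJ\})$ is $N/P$ acting regularly, while $\lG$ acts on $\{g_iJ\}$ by left translation; a direct check shows the two are compatible, i.e. the image of $\lambda(\gamma)n\lambda(\gamma)^{-1}$ is the conjugate by $\overline{\lambda(\gamma)}$ of the image of $n$. Since $J\triangleleft G$, every $\lambda(h)$ with $h\in J$ acts trivially on $\{g_iJ\}$, and faithfulness of the regular action of $N/P$ on $\{g_iJ\}$ then forces $\lambda(h)n\lambda(h)^{-1}\equiv n\pmod{P}$. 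Consequently $(K[N/P])^{\lambda(J)}=F[N/P]$ (Galois action on the coefficients $K$, trivial action on $N/P$), and taking the remaining $\lambda(G)/\lambda(J)$-invariants gives $(K[N/P])^{\lG}=H_{N/P}$; this simultaneously yields $K\otimes_k H_{N/P}\cong K\otimes_F F[N/P]\cong K[N/P]$ by two applications of Galois descent, while $K\otimes_k H_N\cong K[N]$ and $K\otimes_k H_P\cong K[P]$ are Galois descent for $K/k$ directly.

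With the maps and the isomorphisms $K\otimes_k(-)$ in hand, I would finish using faithful flatness of $K/k$: injectivity of $H_P\to H_N$, normality of that inclusion ($H_P^+H_N=H_NH_P^+$, with $H_P^+$ the augmentation ideal), surjectivity and conormality of $H_N\to H_{N/P}$, and the identifications $H_{N/P}\cong H_N/H_NH_P^+$ and $H_P\cong H_N^{\operatorname{co}H_{N/P}}$ are each statements about kernels, images, and equationally defined submodules of $k$-linear maps of $k$-modules, hence are both preserved and reflected by $K\otimes_k(-)$. Since the base-changed sequence $K\to K[P]\to K[N]\to K[N/P]\to K$ is exact, so is the original.

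The main obstacle is the construction of the quotient map $H_N\to H_{N/P}$, and precisely the verification that $\lambda(J)$ acts trivially on $N/P$, so that $(K[N/P])^{\lG}$ really is $H_{N/P}$ and not some twisted form. (If $J$ is not normal in $G$ one should instead work over $\operatorname{Perm}(S)$ with $S$ the set of left cosets and replace $H_{N/P}$ by $(K[N/P])^{\lG}$ as in Theorem \ref{mainFknon-normal}; the descent argument of the previous paragraph is then unchanged.) Once the three Hopf algebras are exhibited as $K/k$- and $F/k$-forms of $K[P]\subseteq K[N]\twoheadrightarrow K[N/P]$ with matching maps, the exactness is formal.
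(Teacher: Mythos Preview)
Your proof is correct and follows essentially the same strategy as the paper: identify $(F[N/P])^{\lambda(G)/\lambda(J)}$ with $(K[N/P])^{\lambda(G)}$ by showing that $\lambda(J)$ acts trivially on $N/P$ (the paper simply cites the computation in Theorem~\ref{QnormQ} for this), and then deduce exactness from the group-ring sequence $K\to K[P]\to K[N]\to K[N/P]\to K$ via faithful flatness of $K/k$. Your version is more explicit about which Hopf-theoretic conditions (normality, conormality, the coinvariant and quotient identifications) are being reflected by faithfully flat base change, but the core argument is identical.
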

\begin{proof}
The reason this is exact is that we can rewrite the last term  $(F[N/P])^{\lambda(G)/\lambda(J)}$ as 
$$ 
\underbrace{(\underbrace{(K[N/P])^{\lambda(J)}}_{descend\ from\ K\ to\ F})^{\lambda(G)/\lambda(J)}}_{descend\ from\ F\ to\ k}=(K[N/P])^{\lambda(G)}
$$
since $(K[N/P])^{\lambda(J)}=F[N/P]$ due to $\lambda(J)$ acting trivially on $N/P$ as one can see from the proof of Thm. \ref{QnormQ}.
So in the second row below
$$\diagram
 1 \rto                 & K[P] \rto\dto_{\lambda(G)}              & K[N] \rto\dto_{\lambda(G)}              &  K[N/P] \rto\dto_{\lambda(G)}             & 1 \\
 1 \rto                 & (K[P])^{\lambda(G)} \rto                & (K[N])^{\lambda(G)} \rto                & (K[N/P])^{\lambda(G)} \rto        & 1 \\
\enddiagram$$
exactness is due to faithful flatness. That is, the upper row being exact implies that the lower row, consisting of the fixed rings, under the action of $\lambda(G)$, is exact.
\end{proof}
Note, unsurprisingly, that by Thm. \ref{mainFknon-normal} $(K[N/P])^{\lambda(G)}$ is exactly the Hopf algebra which acts on $F/k$ when $J$ is not normal in $G$. Of course, in that setting $G/J$ does not act on $F/k$ since each coset representative corresponds to a different conjugate field of $F$.
\section{Examples}
We conclude with a set of examples whose definition partly explains the notational choices for the sizes of the relevant groups given earlier. The condition that $P\triangleleft N$ is, in itself, not a rare one at all. However, the requirement that $P$ be normalized by $G$ as well is much more stringent.\par
One class of examples where this is automatic are those subgroups $P$ which are characteristic in $N$. Being characteristic is a feature of certain classes of subgroups, for example centers, Frattini subgroups, commutators etc.\par
Another more basic condition is uniqueness based on order considerations. Specifically suppose $|G|=|N|=mp$ where $p$ is prime, $gcd(p,m)=1$, and the Sylow $p$-subgroup is unique, such as when $p>m$. In this situation the Sylow $p$-subgroup is characteristic, and moreover $J=\Psi(P)$ must therefore be the similarly unique Sylow $p$-subgroup of $G$ as well. As such, $J\triangleleft G$ and therefore $F/k$ is Galois with respect to $G/J$ and Hopf-Galois with respect to $(F[N/P])^{\lambda(G)/\lambda(J)}$.\par
We include below tables for the 6 groups of order 42 mentioned above, $C_2\times (C_7\rtimes C_3)$, $C_3\times D_7$, $C_{42}$, $(C_7\rtimes C_3)\rtimes C_2$, $C_7\times D_3$, and $D_{21}$ where $p=7$ and $m=6$. In these tables below, we restrict to those $P$ normal in $N$. We denote by $[N]$ the isomorphism class of $N$, and similarly $[P]$ and $[J]$, and where the first column is the number of such triples $([N],[P],[J])$ for the given $G$, and the last column indicates whether $J$ was normal in $G$ or $|I|=[K:\tilde F]$ if not. One point to reiterate is that for each $P$, the group $J=\Psi(P)$ need not be isomorphic to $P$.\par
\newpage
$G \cong D_{{21}}$\par
\begin{tabular}{|c|c|c|c|c|c|}\hline
\#$N$ & $[N]$ & $[P]$  & $[J]$ & $J$ normal in $G$ or not \\ \hline
  6 & $C_{3} \times D_{7}$ & $C_{21}$ & $C_{21}$ & $ J \triangleleft G$ \\ \hline 
      6 & $C_{3} \times D_{7}$ & $C_{3}$ & $C_{3}$ & $ J \triangleleft G$ \\ \hline 
      6 & $C_{3} \times D_{7}$ & $C_{7}$ & $C_{7}$ & $ J \triangleleft G$ \\ \hline 
      6 & $C_{3} \times D_{7}$ & $D_{7}$ & $D_{7}$ & $|I|=7$ \\ \hline 
     21 & $C_{42}$ & $C_{14}$ & $D_{7}$ & $|I|=7$ \\ \hline 
     21 & $C_{42}$ & $C_{21}$ & $C_{21}$ & $ J \triangleleft G$ \\ \hline 
     21 & $C_{42}$ & $C_{2}$ & $C_{2}$ & $|I|=1$ \\ \hline 
     21 & $C_{42}$ & $C_{3}$ & $C_{3}$ & $ J \triangleleft G$ \\ \hline 
     21 & $C_{42}$ & $C_{6}$ & $D_{3}$ & $|I|=3$ \\ \hline 
     21 & $C_{42}$ & $C_{7}$ & $C_{7}$ & $ J \triangleleft G$ \\ \hline 
     14 & $C_{7} \times D_{3}$ & $C_{21}$ & $C_{21}$ & $ J \triangleleft G$ \\ \hline 
     14 & $C_{7} \times D_{3}$ & $C_{3}$ & $C_{3}$ & $ J \triangleleft G$ \\ \hline 
     14 & $C_{7} \times D_{3}$ & $C_{7}$ & $C_{7}$ & $ J \triangleleft G$ \\ \hline 
     14 & $C_{7} \times D_{3}$ & $D_{3}$ & $D_{3}$ & $|I|=3$ \\ \hline 
      4 & $D_{21}$ & $C_{21}$ & $C_{21}$ & $ J \triangleleft G$ \\ \hline 
      4 & $D_{21}$ & $C_{3}$ & $C_{3}$ & $ J \triangleleft G$ \\ \hline 
      4 & $D_{21}$ & $C_{7}$ & $C_{7}$ & $ J \triangleleft G$ \\ \hline 
\end{tabular}\par\newpage
$G\cong  C_{2} \times (C_{7} \rtimes C_{3}) $\par
\begin{tabular}{|c|c|c|c|c|c|}\hline
\#$N$ & $[N]$ & $[P]$  & $[J]$ & $J$ normal in $G$ or not \\ \hline
   16 & $C_{2} \times (C_{7} \rtimes C_{3})$ & $C_{14}$ & $C_{14}$ & $ J \triangleleft G$ \\ \hline 
     16 & $C_{2} \times (C_{7} \rtimes C_{3})$ & $C_{2}$ & $C_{2}$ & $ J \triangleleft G$ \\ \hline 
     16 & $C_{2} \times (C_{7} \rtimes C_{3})$ & $C_{7} \rtimes C_{3}$ & $C_{7} \rtimes C_{3}$ & $ J \triangleleft G$ \\ \hline 
     16 & $C_{2} \times (C_{7} \rtimes C_{3})$ & $C_{7}$ & $C_{7}$ & $ J \triangleleft G$ \\ \hline 
     14 & $C_{3} \times D_{7}$ & $C_{21}$ & $C_{7} \rtimes C_{3}$ & $ J \triangleleft G$ \\ \hline 
     14 & $C_{3} \times D_{7}$ & $C_{3}$ & $C_{3}$ & $|I|=1$ \\ \hline 
     14 & $C_{3} \times D_{7}$ & $C_{7}$ & $C_{7}$ & $ J \triangleleft G$ \\ \hline 
     14 & $C_{3} \times D_{7}$ & $D_{7}$ & $C_{14}$ & $ J \triangleleft G$ \\ \hline 
      7 & $C_{42}$ & $C_{14}$ & $C_{14}$ & $ J \triangleleft G$ \\ \hline 
      7 & $C_{42}$ & $C_{21}$ & $C_{7} \rtimes C_{3}$ & $ J \triangleleft G$ \\ \hline 
      7 & $C_{42}$ & $C_{2}$ & $C_{2}$ & $ J \triangleleft G$ \\ \hline 
      7 & $C_{42}$ & $C_{3}$ & $C_{3}$ & $|I|=1$ \\ \hline 
      7 & $C_{42}$ & $C_{6}$ & $C_{6}$ & $|I|=2$ \\ \hline 
      7 & $C_{42}$ & $C_{7}$ & $C_{7}$ & $ J \triangleleft G$ \\ \hline 
     28 & $(C_{7} \rtimes C_{3}) \rtimes C_{2}$ & $C_{7} \rtimes C_{3}$ & $C_{7} \rtimes C_{3}$ & $ J \triangleleft G$ \\ \hline 
     28 & $(C_{7} \rtimes C_{3}) \rtimes C_{2}$ & $C_{7}$ & $C_{7}$ & $ J \triangleleft G$ \\ \hline 
     28 & $(C_{7} \rtimes C_{3}) \rtimes C_{2}$ & $D_{7}$ & $C_{14}$ & $ J \triangleleft G$ \\ \hline 
     14 & $C_{7} \times D_{3}$ & $C_{21}$ & $C_{7} \rtimes C_{3}$ & $ J \triangleleft G$ \\ \hline 
     14 & $C_{7} \times D_{3}$ & $C_{3}$ & $C_{3}$ & $|I|=1$ \\ \hline 
     14 & $C_{7} \times D_{3}$ & $C_{7}$ & $C_{7}$ & $ J \triangleleft G$ \\ \hline 
     14 & $C_{7} \times D_{3}$ & $D_{3}$ & $C_{6}$ & $|I|=2$ \\ \hline 
     28 & $D_{21}$ & $C_{21}$ & $C_{7} \rtimes C_{3}$ & $ J \triangleleft G$ \\ \hline 
     28 & $D_{21}$ & $C_{3}$ & $C_{3}$ & $|I|=1$ \\ \hline 
     28 & $D_{21}$ & $C_{7}$ & $C_{7}$ & $ J \triangleleft G$ \\ \hline 
\end{tabular}\par\newpage
$G \cong C_{3} \times D_{7}$\par
\begin{tabular}{|c|c|c|c|c|c|}\hline
\#$N$ & $[N]$ & $[P]$  & $[J]$ & $J$ normal in $G$ or not \\ \hline
  28 & $C_{2} \times (C_{7} \rtimes C_{3})$ & $C_{14}$ & $D_{7}$ & $ J \triangleleft G$ \\ \hline 
     28 & $C_{2} \times (C_{7} \rtimes C_{3})$ & $C_{2}$ & $C_{2}$ & $|I|=1$ \\ \hline 
     28 & $C_{2} \times (C_{7} \rtimes C_{3})$ & $C_{7} \rtimes C_{3}$ & $C_{21}$ & $ J \triangleleft G$ \\ \hline 
     28 & $C_{2} \times (C_{7} \rtimes C_{3})$ & $C_{7}$ & $C_{7}$ & $ J \triangleleft G$ \\ \hline 
      2 & $C_{3} \times D_{7}$ & $C_{21}$ & $C_{21}$ & $ J \triangleleft G$ \\ \hline 
      2 & $C_{3} \times D_{7}$ & $C_{3}$ & $C_{3}$ & $ J \triangleleft G$ \\ \hline 
      2 & $C_{3} \times D_{7}$ & $C_{7}$ & $C_{7}$ & $ J \triangleleft G$ \\ \hline 
      2 & $C_{3} \times D_{7}$ & $D_{7}$ & $D_{7}$ & $ J \triangleleft G$ \\ \hline 
      7 & $C_{42}$ & $C_{14}$ & $D_{7}$ & $ J \triangleleft G$ \\ \hline 
      7 & $C_{42}$ & $C_{21}$ & $C_{21}$ & $ J \triangleleft G$ \\ \hline 
      7 & $C_{42}$ & $C_{2}$ & $C_{2}$ & $|I|=1$ \\ \hline 
      7 & $C_{42}$ & $C_{3}$ & $C_{3}$ & $ J \triangleleft G$ \\ \hline 
      7 & $C_{42}$ & $C_{6}$ & $C_{6}$ & $|I|=3$ \\ \hline 
      7 & $C_{42}$ & $C_{7}$ & $C_{7}$ & $ J \triangleleft G$ \\ \hline 
     28 & $(C_{7} \rtimes C_{3}) \rtimes C_{2}$ & $C_{7} \rtimes C_{3}$ & $C_{21}$ & $ J \triangleleft G$ \\ \hline 
     28 & $(C_{7} \rtimes C_{3}) \rtimes C_{2}$ & $C_{7}$ & $C_{7}$ & $ J \triangleleft G$ \\ \hline 
     28 & $(C_{7} \rtimes C_{3}) \rtimes C_{2}$ & $D_{7}$ & $D_{7}$ & $ J \triangleleft G$ \\ \hline 
     14 & $C_{7} \times D_{3}$ & $C_{21}$ & $C_{21}$ & $ J \triangleleft G$ \\ \hline 
     14 & $C_{7} \times D_{3}$ & $C_{3}$ & $C_{3}$ & $ J \triangleleft G$ \\ \hline 
     14 & $C_{7} \times D_{3}$ & $C_{7}$ & $C_{7}$ & $ J \triangleleft G$ \\ \hline 
     14 & $C_{7} \times D_{3}$ & $D_{3}$ & $C_{6}$ & $|I|=3$ \\ \hline 
      4 & $D_{21}$ & $C_{21}$ & $C_{21}$ & $ J \triangleleft G$ \\ \hline 
      4 & $D_{21}$ & $C_{3}$ & $C_{3}$ & $ J \triangleleft G$ \\ \hline 
      4 & $D_{21}$ & $C_{7}$ & $C_{7}$ & $ J \triangleleft G$ \\ \hline 
\end{tabular}\par\newpage
$G \cong C_{42}$\par
\begin{tabular}{|c|c|c|c|c|c|}\hline
\#$N$ & $[N]$ & $[P]$  & $[J]$ & $J$ normal in $G$ or not \\ \hline
     4 & $C_{2} \times (C_{7} \rtimes C_{3})$ & $C_{14}$ & $C_{14}$ & $ J \triangleleft G$ \\ \hline 
      4 & $C_{2} \times (C_{7} \rtimes C_{3})$ & $C_{2}$ & $C_{2}$ & $ J \triangleleft G$ \\ \hline 
      4 & $C_{2} \times (C_{7} \rtimes C_{3})$ & $C_{7} \rtimes C_{3}$ & $C_{21}$ & $ J \triangleleft G$ \\ \hline 
      4 & $C_{2} \times (C_{7} \rtimes C_{3})$ & $C_{7}$ & $C_{7}$ & $ J \triangleleft G$ \\ \hline 
      2 & $C_{3} \times D_{7}$ & $C_{21}$ & $C_{21}$ & $ J \triangleleft G$ \\ \hline 
      2 & $C_{3} \times D_{7}$ & $C_{3}$ & $C_{3}$ & $ J \triangleleft G$ \\ \hline 
      2 & $C_{3} \times D_{7}$ & $C_{7}$ & $C_{7}$ & $ J \triangleleft G$ \\ \hline 
      2 & $C_{3} \times D_{7}$ & $D_{7}$ & $C_{14}$ & $ J \triangleleft G$ \\ \hline 
      1 & $C_{42}$ & $C_{14}$ & $C_{14}$ & $ J \triangleleft G$ \\ \hline 
      1 & $C_{42}$ & $C_{21}$ & $C_{21}$ & $ J \triangleleft G$ \\ \hline 
      1 & $C_{42}$ & $C_{2}$ & $C_{2}$ & $ J \triangleleft G$ \\ \hline 
      1 & $C_{42}$ & $C_{3}$ & $C_{3}$ & $ J \triangleleft G$ \\ \hline 
      1 & $C_{42}$ & $C_{6}$ & $C_{6}$ & $ J \triangleleft G$ \\ \hline 
      1 & $C_{42}$ & $C_{7}$ & $C_{7}$ & $ J \triangleleft G$ \\ \hline 
      4 & $(C_{7} \rtimes C_{3}) \rtimes C_{2}$ & $C_{7} \rtimes C_{3}$ & $C_{21}$ & $ J \triangleleft G$ \\ \hline 
      4 & $(C_{7} \rtimes C_{3}) \rtimes C_{2}$ & $C_{7}$ & $C_{7}$ & $ J \triangleleft G$ \\ \hline 
      4 & $(C_{7} \rtimes C_{3}) \rtimes C_{2}$ & $D_{7}$ & $C_{14}$ & $ J \triangleleft G$ \\ \hline 
      2 & $C_{7} \times D_{3}$ & $C_{21}$ & $C_{21}$ & $ J \triangleleft G$ \\ \hline 
      2 & $C_{7} \times D_{3}$ & $C_{3}$ & $C_{3}$ & $ J \triangleleft G$ \\ \hline 
      2 & $C_{7} \times D_{3}$ & $C_{7}$ & $C_{7}$ & $ J \triangleleft G$ \\ \hline 
      2 & $C_{7} \times D_{3}$ & $D_{3}$ & $C_{6}$ & $ J \triangleleft G$ \\ \hline 
      4 & $D_{21}$ & $C_{21}$ & $C_{21}$ & $ J \triangleleft G$ \\ \hline 
      4 & $D_{21}$ & $C_{3}$ & $C_{3}$ & $ J \triangleleft G$ \\ \hline 
      4 & $D_{21}$ & $C_{7}$ & $C_{7}$ & $ J \triangleleft G$ \\ \hline 
\end{tabular}\par\newpage
$G \cong (C_{7} \rtimes C_{3}) \rtimes C_{2}$\par
\begin{tabular}{|c|c|c|c|c|c|}\hline
\#$N$ & $[N]$ & $[P]$  & $[J]$ & $J$ normal in $G$ or not \\ \hline
    28 & $C_{2} \times (C_{7} \rtimes C_{3})$ & $C_{14}$ & $D_{7}$ & $ J \triangleleft G$ \\ \hline 
     28 & $C_{2} \times (C_{7} \rtimes C_{3})$ & $C_{2}$ & $C_{2}$ & $|I|=1$ \\ \hline 
     28 & $C_{2} \times (C_{7} \rtimes C_{3})$ & $C_{7} \rtimes C_{3}$ & $C_{7} \rtimes C_{3}$ & $ J \triangleleft G$ \\ \hline 
     28 & $C_{2} \times (C_{7} \rtimes C_{3})$ & $C_{7}$ & $C_{7}$ & $ J \triangleleft G$ \\ \hline 
     14 & $C_{3} \times D_{7}$ & $C_{21}$ & $C_{7} \rtimes C_{3}$ & $ J \triangleleft G$ \\ \hline 
     14 & $C_{3} \times D_{7}$ & $C_{3}$ & $C_{3}$ & $|I|=1$ \\ \hline 
     14 & $C_{3} \times D_{7}$ & $C_{7}$ & $C_{7}$ & $ J \triangleleft G$ \\ \hline 
     14 & $C_{3} \times D_{7}$ & $D_{7}$ & $D_{7}$ & $ J \triangleleft G$ \\ \hline 
      7 & $C_{42}$ & $C_{14}$ & $D_{7}$ & $ J \triangleleft G$ \\ \hline 
      7 & $C_{42}$ & $C_{21}$ & $C_{7} \rtimes C_{3}$ & $ J \triangleleft G$ \\ \hline 
      7 & $C_{42}$ & $C_{2}$ & $C_{2}$ & $|I|=1$ \\ \hline 
      7 & $C_{42}$ & $C_{3}$ & $C_{3}$ & $|I|=1$ \\ \hline 
      7 & $C_{42}$ & $C_{6}$ & $C_{6}$ & $|I|=1$ \\ \hline 
      7 & $C_{42}$ & $C_{7}$ & $C_{7}$ & $ J \triangleleft G$ \\ \hline 
     16 & $(C_{7} \rtimes C_{3}) \rtimes C_{2}$ & $C_{7} \rtimes C_{3}$ & $C_{7} \rtimes C_{3}$ & $ J \triangleleft G$ \\ \hline 
     16 & $(C_{7} \rtimes C_{3}) \rtimes C_{2}$ & $C_{7}$ & $C_{7}$ & $ J \triangleleft G$ \\ \hline 
     16 & $(C_{7} \rtimes C_{3}) \rtimes C_{2}$ & $D_{7}$ & $D_{7}$ & $ J \triangleleft G$ \\ \hline 
     14 & $C_{7} \times D_{3}$ & $C_{21}$ & $C_{7} \rtimes C_{3}$ & $ J \triangleleft G$ \\ \hline 
     14 & $C_{7} \times D_{3}$ & $C_{3}$ & $C_{3}$ & $|I|=1$ \\ \hline 
     14 & $C_{7} \times D_{3}$ & $C_{7}$ & $C_{7}$ & $ J \triangleleft G$ \\ \hline 
     14 & $C_{7} \times D_{3}$ & $D_{3}$ & $C_{6}$ & $|I|=1$ \\ \hline 
     28 & $D_{21}$ & $C_{21}$ & $C_{7} \rtimes C_{3}$ & $ J \triangleleft G$ \\ \hline 
     28 & $D_{21}$ & $C_{3}$ & $C_{3}$ & $|I|=1$ \\ \hline 
     28 & $D_{21}$ & $C_{7}$ & $C_{7}$ & $ J \triangleleft G$ \\ \hline 
\end{tabular}\par\newpage
$G \cong C_{7} \times D_3$\par
\begin{tabular}{|c|c|c|c|c|c|}\hline
\#$N$ & $[N]$ & $[P]$  & $[J]$ & $J$ normal in $G$ or not \\ \hline
    6 & $C_{3} \times D_{7}$ & $C_{21}$ & $C_{21}$ & $ J \triangleleft G$ \\ \hline 
      6 & $C_{3} \times D_{7}$ & $C_{3}$ & $C_{3}$ & $ J \triangleleft G$ \\ \hline 
      6 & $C_{3} \times D_{7}$ & $C_{7}$ & $C_{7}$ & $ J \triangleleft G$ \\ \hline 
      6 & $C_{3} \times D_{7}$ & $D_{7}$ & $C_{14}$ & $|I|=7$ \\ \hline 
      3 & $C_{42}$ & $C_{14}$ & $C_{14}$ & $|I|=7$ \\ \hline 
      3 & $C_{42}$ & $C_{21}$ & $C_{21}$ & $ J \triangleleft G$ \\ \hline 
      3 & $C_{42}$ & $C_{2}$ & $C_{2}$ & $|I|=1$ \\ \hline 
      3 & $C_{42}$ & $C_{3}$ & $C_{3}$ & $ J \triangleleft G$ \\ \hline 
      3 & $C_{42}$ & $C_{6}$ & $D_{3}$ & $ J \triangleleft G$ \\ \hline 
      3 & $C_{42}$ & $C_{7}$ & $C_{7}$ & $ J \triangleleft G$ \\ \hline 
      2 & $C_{7} \times D_{3}$ & $C_{21}$ & $C_{21}$ & $ J \triangleleft G$ \\ \hline 
      2 & $C_{7} \times D_{3}$ & $C_{3}$ & $C_{3}$ & $ J \triangleleft G$ \\ \hline 
      2 & $C_{7} \times D_{3}$ & $C_{7}$ & $C_{7}$ & $ J \triangleleft G$ \\ \hline 
      2 & $C_{7} \times D_{3}$ & $D_{3}$ & $D_{3}$ & $ J \triangleleft G$ \\ \hline 
      4 & $D_{21}$ & $C_{21}$ & $C_{21}$ & $ J \triangleleft G$ \\ \hline 
      4 & $D_{21}$ & $C_{3}$ & $C_{3}$ & $ J \triangleleft G$ \\ \hline 
      4 & $D_{21}$ & $C_{7}$ & $C_{7}$ & $ J \triangleleft G$ \\ \hline 
\end{tabular}\par

\bibliography{structure}

\begin{thebibliography}{1}

\bibitem{ChaseSweedler1969}
S.U. Chase and M.~Sweedler.
\newblock {\em Hopf Algebras and {G}alois Theory}.
\newblock Number~97 in Lecture Notes in Mathematics. Springer Verlag, Berlin,
  1969.

\bibitem{Ch00}
L.~N. Childs.
\newblock {\em Taming Wild Extensions: Hopf Algebras and Local Galois Module
  Theory}, volume~80.
\newblock American Mathematical Society, Mathematical Surveys and Monographs,
  2000.

\bibitem{Cox}
D.~O'Shea D.~Cox, J.~Little.
\newblock {\em Ideals, Varieties, and Algorithms}.
\newblock Springer, 2015.

\bibitem{GAP4}
The GAP~Group.
\newblock {\em {GAP -- Groups, Algorithms, and Programming, Version 4.3}},
  2002.
\newblock \verb+(http://www.gap-system.org)+.

\bibitem{GreitherPareigis1987}
C.~Greither and B.~Pareigis.
\newblock Hopf galois theory for separable field extensions.
\newblock {\em J. Algebra}, 106:239--258, 1987.

\bibitem{Kohl2013}
T.~Kohl.
\newblock Regular permutation groups of order mp and hopf-galois structures.
\newblock {\em Algebra and Number Theory}, 7(9):2203--2240, 2013.

\bibitem{Radford}
D.~Radford.
\newblock {\em Hopf Algebras}.
\newblock World Scientific, 2011.

\bibitem{Crespo-Induced}
M.~Vela T.~Crespo, A.~Rio.
\newblock Induced hopf galois structures.
\newblock {\em Arxiv preprint GR}, 2015.

\bibitem{Crespo-Separable}
M.~Vela T.~Crespo, A.~Rio.
\newblock On the galois correspondence theorem in separable hopf galois theory.
\newblock {\em Publicacions Matem`tiques}, 60(1):221--234, 2016.

\end{thebibliography}
\bibliographystyle{plain}
\end{document}